\documentclass[11pt]{amsart}

\setlength{\evensidemargin}{0.1in}
\setlength{\oddsidemargin}{0.1in}
\setlength{\textwidth}{6.3in}
\setlength{\topmargin}{0.0in}
\setlength{\textheight}{8.7in}

\setcounter{totalnumber}{50}
\setcounter{topnumber}{50}
\setcounter{bottomnumber}{50}

\usepackage{amssymb,amsthm,amsmath,epsfig}
\usepackage{verbatim}

\usepackage{color}
\definecolor{darkblue}{rgb}{0, 0, .4}
\definecolor{grey}{rgb}{.7, .7, .7}
\usepackage[breaklinks]{hyperref}
\hypersetup{
    colorlinks=true,
    linkcolor=darkblue,
    anchorcolor=darkblue,
    citecolor=darkblue,
    pagecolor=darkblue,
    urlcolor=darkblue,
    pdftitle={},
    pdfauthor={}
}


\newtheorem{theorem}{Theorem}[section]
\newtheorem{lemma}[theorem]{Lemma}

\theoremstyle{definition}
\newtheorem{definition}[theorem]{Definition}
\newtheorem{example}[theorem]{Example}

\numberwithin{equation}{section}


\theoremstyle{theorem}
\newtheorem{corollary}[theorem]{Corollary}

\newtheorem{proposition}[theorem]{Proposition}

\newtheorem{identity}[equation]{Identity}
\newtheorem{interpretation}{Interpretation}

\newcommand{\ul}[1]{\underline{#1}}



\newcommand{\hs}{}  
\newcommand{\hf}{\bullet}  


\usepackage[all, dvips, knot]{xy}
\usepackage[all, knot, dvips, color]{xypic}
\xyoption{arc}

\newcommand{\w}{\mathsf{w}}


\newcommand{\tld}{\widetilde}

\newcommand{\subsets}[2]{\genfrac{[}{]}{0pt}{}{#1}{#2}}
\newcommand{\qbinom}[2]{\genfrac{[}{]}{0pt}{}{#1}{#2}_q}

\renewcommand{\L}{\ensuremath{\textup{\sc l}}}
\newcommand{\M}{\ensuremath{\textup{\sc m}}}
\newcommand{\R}{\ensuremath{\textup{\sc r}}}
\newcommand{\inv}{\ensuremath{\textup{inv}}}
\newcommand{\calR}{\ensuremath{\mathcal{R}}}

\renewcommand{\l}{\ell}

\newcommand{\abacus}[1]{{ \tiny \xymatrix @-2.3pc { #1 } }}
\newcommand{\ci}[1]{{\xy*{ #1 }*\cir<7pt>{}\endxy}}  
\newcommand{\nc}[1]{{\xy*{ #1 }*i\cir<7pt>{}\endxy}}  


\begin{document}

\title{The enumeration of fully commutative affine permutations}

\begin{abstract}
We give a generating function for the fully commutative affine
permutations enumerated by rank and Coxeter length, extending formulas due to
Stembridge and Barcucci--Del Lungo--Pergola--Pinzani.  For fixed rank, the
length generating functions have coefficients that are periodic with period
dividing the rank.  In the course of proving these formulas, we obtain results
that elucidate the structure of the fully commutative affine permutations.
\end{abstract}

\author{Christopher R.\ H.\ Hanusa}
\address{Department of Mathematics \\ Queens College (CUNY) \\ 65-30 Kissena Blvd. \\ Flushing, NY 11367}
\email{\href{mailto:chanusa@qc.cuny.edu}{\texttt{chanusa@qc.cuny.edu}}}
\urladdr{\url{http://qc.edu/\~chanusa/}}

\author{Brant C. Jones}
\address{Department of Mathematics \\ One Shields Avenue \\ University of California \\  Davis, CA 95616}
\email{\href{mailto:brant@math.ucdavis.edu}{\texttt{brant@math.ucdavis.edu}}}
\urladdr{\url{http://www.math.ucdavis.edu/\~brant/}}

\thanks{The second author received support from NSF grant DMS-0636297.}


\subjclass[2000]{Primary 05A15, 05E15, 20F55; Secondary 05A30. \footnote{05A15 Exact enumeration problems, generating functions, 05E15 Combinatorial problems concerning the classical groups, 20F55 Reflection and Coxeter groups, 05A30 $q$-calculus and related topics}
}
\keywords{affine Coxeter group, abacus diagram, window notation, complete notation, Gaussian polynomial}


\maketitle


\section{Introduction}

Let $W$ be a Coxeter group.  An element $w$ of $W$ is \em fully commutative \em
if any reduced expression for $w$ can be obtained from any other using only
commutation relations among the generators.  For example, if $W$ is simply laced
then the fully commutative elements of $W$ are those with no $s_i s_j s_i$
factor in any reduced expression, where $s_i$ and $s_j$ are any noncommuting
generators.

The fully commutative elements form an interesting class of Coxeter group
elements with many special properties relating to smoothness of Schubert
varieties \cite{fan1}, Kazhdan--Lusztig polynomials and $\mu$-coefficients
\cite{b-w,green-tr}, Lusztig's $a(w)$-function \cite{bremke-fan,shi-a}, and decompositions
into cells \cite{shi-cells,fan-stembridge,green-losonczy-cells}.  Some of the
properties carry over to the freely braided and maximally clustered elements
introduced in \cite{g-l1,g-l2,losonczy-mc}.  At the level of the Coxeter group,
\cite{s1} shows that each fully commutative element $w$ has a unique labeled
partial order called the \em heap \em of $w$ whose linear extensions encode all
of the reduced expressions for $w$.  

Stembridge \cite{s1} (see also \cite{fan-thesis,graham-thesis}) classified the Coxeter groups having
finitely many fully commutative elements.
In \cite{s2}, he then enumerated the total number of fully commutative
elements in each of these Coxeter groups.  The type $A$ series yields the
Catalan numbers, a result previously given in \cite{b-j-s}.  Barcucci et al.\
\cite{barcucci} have enumerated the fully commutative permutations by Coxeter
length, obtaining a $q$-analogue of the Catalan numbers.  Our main result in
Theorem~\ref{t:all} is an analogue of this result for the affine symmetric
group.

In a general Coxeter group, the fully commutative elements index a basis for a
quotient of the corresponding Hecke algebra
\cite{graham-thesis,fan-thesis,green-losonczy-canonical}.  In type $A$, this
quotient is the Temperley--Lieb algebra; see \cite{temperley-lieb,westbury}.
Therefore, our result can be interpreted as a graded dimension formula for the
affine analogue of this algebra.

In Section~\ref{s:background}, we introduce the necessary definitions and
background information.  In Section~\ref{s:enumeration}, we enumerate the fully
commutative affine permutations by decomposing them into several subsets.  The
formula that we obtain turns out to involve a ratio of $q$-Bessel functions as
described in \cite{barcucci_q_bessel} arising as the solution obtained by
\cite{MBMcolumnconvex} of a certain recurrence relation on the generating
function.  A similar phenomenon occurred in \cite{barcucci}, and our work can be
viewed as a description of how to lift this formula to the affine case.  It
turns out that the only additional ingredients that we need for our formula are
certain sums and products of $q$-binomial coefficients.  In
Section~\ref{s:numerical}, we prove that for fixed rank, the coefficients of
the length generating functions are periodic with period dividing the rank.
This result gives another way to determine the generating functions by computing
the finite initial sequence of coefficients until the periodicity takes over.
We mention some further questions in Section~\ref{s:future}.


\section{Background}
\label{s:background}

In this section, we introduce the affine symmetric group, abacus diagrams for
minimal length coset representatives, and $q$-binomial coefficients.

\subsection{The affine symmetric group}\label{s:affsym}

We view the symmetric group $S_n$ as the Coxeter group of type $A$ with
generating set $S = \{ s_1, \ldots, s_{n-1} \}$ and relations of the form $(s_i
s_{i \pm 1})^3 = 1$ together with $(s_{i}s_{j})^{2} = 1$ for $| i - j | \geq 2$
and ${s_i}^2 = 1$.  We denote $\bigcup_{n \geq 1} S_n$ by $S_{\infty}$ and call
$n(w)$ the minimal rank $n$ of $w \in S_n \subset S_{\infty}$.  The affine symmetric
group $\widetilde{S}_n$ is also a Coxeter group; it is generated by
$\widetilde{S} = S \cup \{ s_0 \}$ with the same relations as in the symmetric
group together with ${s_0}^2 = 1$, $(s_{n-1} s_0)^3 = 1$, $(s_0 s_1)^3 = 1$,
and $(s_0 s_j)^2 = 1$ for $2 \leq j \leq n-2$.

Recall that the products of generators from $S$ or $\widetilde{S}$ with a
minimal number of factors are called \em reduced expressions\em, and $\l(w)$ is
the length of such an expression for an (affine) permutation $w$.  Given an
(affine) permutation $w$, we represent reduced expressions for $w$ in sans
serif font, say $\w=\w_{1} \w_{2}\cdots \w_{p}$ where each $\w_{i} \in S$ or
$\widetilde{S}$.  We call any expression of the form $s_i s_{i \pm 1} s_i$ a
\em short braid\em, where the indices $i, i \pm 1$ are taken mod $n$ if we are
working in $\widetilde{S}_n$.  There is a well-known theorem of Matsumoto
\cite{matsumoto} and Tits \cite{t}, which states that any reduced expression
for $w$ can be transformed into any other by applying a sequence of relations
of the form $(s_i s_{i \pm 1})^3 = 1$ (where again $i, i \pm 1$ are taken mod
$n$ in $\widetilde{S}_n$) together with $(s_{i}s_{j})^{2} = 1$ for $| i - j | >
1$.  We say that $s_i$ is a \em left descent \em for $w \in \widetilde{S}_n$ if
$\l(s_i w) < \l(w)$ and we say that $s_i$ is a \em right descent \em for $w \in
\widetilde{S}_n$ if $\l(w s_i) < \l(w)$.

As in \cite{s1}, we define an equivalence relation on the set of reduced
expressions for an (affine) permutation by saying that two reduced expressions are in the
same \em commutativity class \em if one can be obtained from the other by a
sequence of \em commuting moves \em of the form $s_i s_j \mapsto s_j s_i$ where
$|i-j| \geq 2$.  If the reduced expressions for a permutation $w$ form a single
commutativity class, then we say $w$ is \em fully commutative\em.

If $\w = \w_1 \cdots \w_k$ is a reduced expression for any permutation, then
following \cite{s1} we define a partial ordering on the indices $\{1, \ldots,
k\}$ by the transitive closure of the relation $i \lessdot j$ if $i < j$ and
$\w_i$ does not commute with $\w_j$.  We label each element $i$ of the poset by
the corresponding generator $\w_i$.  It follows quickly from the definition that
if $\w$ and $\w'$ are two reduced expressions for an element $w$ that are in the
same commutativity class then the labeled posets of $\w$ and $\w'$ are
isomorphic.  This isomorphism class of labeled posets is called the \em heap \em
of $\w$, where $\w$ is a reduced expression representative for a commutativity
class of $w$.  In particular, if $w \in S_n$ is fully commutative then it has a
single commutativity class, and so there is a unique heap of $w$.  Cartier and
Foata \cite{cartier-foata} were among the first to study heaps of dimers, which
were generalized to other settings by Viennot \cite{viennot}.

We also refer to elements in the symmetric group by the \textit{one-line
notation} $w=[w_{1}w_{2} \cdots w_{n}]$, where $w$ is the bijection mapping $i$
to $w_{i}$.  Then the generators $s_{i}$ are the adjacent transpositions
interchanging the entries $i$ and $i+1$ in the one-line notation.  Let
$w=[w_1 \cdots w_n]$, and suppose that $p=[p_1 \cdots p_k]$ is another permutation in $S_{k}$
for $k\leq n$.  We say $w$ \textit{contains the permutation pattern} $p$ or $w$
\textit{contains} $p$ \textit{as a one-line pattern} whenever there exists a
subsequence $1\leq i_{1}<i_{2}<\cdots<i_{k}\leq n$ such that
\begin{equation*}
w_{i_{a}} < w_{i_{b}} \text{ if and only if } p_{a} < p_{b}
\end{equation*}
for all $1 \leq a < b \leq k$.  We call $(i_1, i_2, \ldots, i_k)$ the \em
pattern instance\em.  For example, $[\ul{5}32\ul{4}\ul{1}]$ contains the
pattern $[321]$ in several ways, including the underlined subsequence.  If $w$
does not contain the pattern $p$, we say that $w$ \em avoids \em $p$.

The \em affine symmetric group \em $\widetilde{S}_{n}$ is realized in
\cite[Chapter 8]{b-b} as the group of bijections $w: \mathbb{Z} \rightarrow
\mathbb{Z}$ satisfying $w(i + n) = w(i) + n$ and $\sum_{i = 1}^n w(i) =
\sum_{i=1}^n i = {{n+1} \choose 2}$.  We call the infinite sequence 
\[ (\ldots, w(-1), w(0), w(1), w(2), \ldots, w(n), w(n+1), \ldots) \]
the \em complete notation \em for $w$ and 
\[ [w(1), w(2), \ldots, w(n)] \]
the \em base window \em for $w$.  By definition, the entries of the base window
determine $w$ and its complete notation.  Moreover, the entries of the base
window can be any set of integers that are normalized to sum to ${ {n+1} \choose
2}$ and such that the entries form a permutation of the residue classes in
$\mathbb{Z}/(n \mathbb{Z})$ when reduced mod $n$.  That is, no two entries of
the base window have the same residue mod $n$.  With these considerations in
mind, we will represent an affine permutation using an abacus diagram together
with a finite permutation.

To describe this, observe that $S_n$ acts on the base window by permuting the entries, which
induces an action of $S_n$ on $\mathbb{Z}$.  In this action, the Coxeter
generator $s_i$ simultaneously interchanges $w(i + kn)$ with $w(i+1+kn)$ for all
$k \in \mathbb{Z}$.  Moreover, the affine generator $s_0$ interchanges all $w(k
n)$ with $w(kn + 1)$.  Hence, $S_{n}$ is a parabolic subgroup of
$\widetilde{S}_{n}$.  We form the parabolic quotient 
\[ \widetilde{S}_{n} / S_{n} = \{ w \in \widetilde{S}_{n} : \l(w s_i) > \l(w) \text{ for all $s_i$ where $1 \leq i \leq n-1$} \}. \]
By a standard result in the theory of Coxeter groups, this set gives a unique
representative of minimal length from each coset $w S_{n}$ of
$\widetilde{S}_{n}$.  For more on this construction, see \cite[Section
2.4]{b-b}.  In our case, the base window of the minimal length coset
representative of an element is obtained by ordering the entries that appear in
the base window increasingly.  This construction implies that, as sets, the
affine symmetric group can be identified with the set $(\widetilde{S}_n / S_n)
\times S_n$.  The minimal length coset representative determines which entries
appear in the base window, and the finite permutation orders these entries in
the base window.

We say that $w$ has a \em descent \em at $i$ whenever $w(i) > w(i+1)$.  Note
that if $w$ has a descent at $i$, then $s_{(\textup{$i$ mod $n$})}$ is a right descent in
the usual Coxeter theoretic sense that $\l(w s_i) < \l(w)$.


\subsection{Abacus diagrams}\label{s:abacus}

The abacus diagrams of \cite{jk} give a combinatorial model for the minimal
length coset representatives in $\widetilde{S}_n / S_n$.  Other combinatorial
models and references for these are given in \cite{BJV}.

An \em abacus diagram \em is a diagram containing $n$ columns
labeled $1, 2, \ldots, n$, called \em runners\em.  The horizontal rows are called
\em levels \em and runner $i$ contains entries labeled by $r n + i$ on each
level $r$ where $-\infty < r < \infty$.  We draw the abacus so that each runner
is vertical, oriented with $-\infty$ at the top and $\infty$ at the bottom, with
runner $1$ in the leftmost position, increasing to runner $n$ in the rightmost
position.  Entries in the abacus diagram may be circled; such circled elements
are called \em beads\em.  Entries that are not circled are called \em gaps\em.
The linear ordering of the entries given by the labels $r n + i$ is called the
\em reading order \em of the abacus which corresponds to scanning left to right,
top to bottom.

We associate an abacus to each minimal length coset representative $w \in
\widetilde{S}_n / S_n$ by drawing beads down to level $w_i$ in runner $i$ for
each $1 \leq i \leq n$ where $\{ w_1, w_2, \ldots, w_n \}$ is the set of
integers in the base window of $w$, with no two having the same residue mod $n$.
Since the entries $w_i$ sum to ${ {n+1} \choose 2}$, we call the abacus
constructed in this way \em balanced\em.  It follows from the construction that
the Coxeter length of the minimal length coset representative can be determined
from the abacus.

\begin{proposition}\label{p:length}
Let $w \in \widetilde{S}_n / S_n$ and form the abacus for $w$ as described
above.  Let $m_i$ denote the number of gaps preceding the lowest bead of runner
$i$ in reading order, for each $1 \leq i \leq n$.  Then, the Coxeter length
$\l(w)$ is $\sum_{i = 1}^n m_i$.
\end{proposition}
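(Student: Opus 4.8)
The plan is to show that the two quantities count the same explicit set of triples of integers. I would begin from the standard description of Coxeter length for affine permutations: for every $u \in \widetilde{S}_n$ one has $\l(u) = \#\{ (a,j) : 1 \le a \le n,\ a < j,\ u(a) > u(j) \}$ (see, e.g., \cite{b-b}). Since our $w$ is a minimal length coset representative, its base window $[w(1), \ldots, w(n)]$ is increasing, so no inversion $(a,j)$ can have $j \le n$; hence every inversion has $j \ge n+1$, and then $a \le n < j$ makes the condition $a < j$ automatic. Writing $j = b + sn$ with $1 \le b \le n$ and $s \ge 1$, and using $w(b+sn) = w(b)+sn$, the length becomes
\[ \l(w) = \#\{ (a,b,s) : 1 \le a, b \le n,\ s \ge 1,\ w(a) > w(b) + sn \}. \]

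Next I would read the right-hand side off the abacus directly. The reading order visits the entry labeled $rn+i$ before the entry labeled $r'n+i'$ exactly when $rn+i < r'n+i'$, so it simply lists the integer labels in increasing order; consequently a gap precedes the lowest bead of runner $i$ if and only if its label is less than $w_i$, the label of that lowest bead. On runner $r$ the beads are precisely the entries labeled $w_r - tn$ for $t \ge 0$ and the gaps are precisely the entries labeled $w_r + tn$ for $t \ge 1$; counting the gaps with label less than $w_i$ and summing over $i$ yields
\[ \sum_{i=1}^n m_i = \#\{ (i,r,t) : 1 \le i, r \le n,\ t \ge 1,\ w_i > w_r + tn \}. \]

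Finally I would observe that $\{ w(1), \ldots, w(n) \}$ and $\{ w_1, \ldots, w_n \}$ are the same set of integers, namely the entries of the base window, one in each residue class modulo $n$; since these entries are distinct, matching $w(a)$ with $w_i$ and $w(b)$ with $w_r$ and taking $s = t$ is a well-defined bijection between the two displayed sets of triples, and it carries the inequality $w(a) > w(b)+sn$ to $w_i > w_r + tn$. Therefore $\l(w) = \sum_{i=1}^n m_i$. I do not anticipate a genuine obstacle: the two points needing a little care are the use of monotonicity of the base window to discard the constraint $a < j$, and the elementary remark that the reading order coincides with the natural order on the integer labels, after which the two triple-counts visibly agree.
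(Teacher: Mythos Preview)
Your argument is correct. You identify both $\l(w)$ and $\sum_i m_i$ with the same set of triples $(a,b,s)$ (equivalently $(i,r,t)$) satisfying $w(a) > w(b) + sn$, using the standard inversion formula for affine length together with the monotonicity of the base window for a minimal length coset representative, and the elementary observation that reading order on the abacus coincides with the natural order on integer labels. The two careful points you flag---dropping the constraint $a<j$ and identifying gaps on runner $r$ with the labels $w_r + tn$ for $t\ge 1$---are handled correctly.

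The paper itself does not supply a proof: it simply declares the result folklore and points to Propositions~3.2.5 and~3.2.8 of \cite{BJV}. Your route is therefore more self-contained; rather than passing through the machinery of core partitions and the bijections developed in \cite{BJV}, you give a direct inversion count, which is arguably the cleanest way to see the statement. What the citation buys is brevity and a connection to a broader framework relating abaci, cores, and the quotient $\widetilde{S}_n/S_n$; what your argument buys is that a reader need not consult another paper to verify the proposition.
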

\begin{proof}
This result is part of the folklore of the subject.  One proof can be obtained
by combining Propositions 3.2.5 and 3.2.8 of \cite{BJV}. 
\end{proof}

\begin{example}
The affine permutation $\widetilde{w} = [-1, -4, 14, 1]$ is identified with the
pair $(w^0, w)$ where $w$ is the finite permutation $s_1 s_3 = [2143]$ which
sorts the elements of the minimal length coset representative $w^0 = [-4, -1, 1,
14]$.  Note that the entries of $w^0$ sum to ${ 5 \choose 2 } = 10$.  The abacus
of $w^0$ is shown below.
\[ 
\abacus {
\ci{-7} &  \ci{-6} & \ci{-5} & \ci{-4} \\
 \ci{-3} &  \ci{-2} & \ci{-1} & \nc{0} \\
 \ci{1} &  \ci{2} & \nc{3} & \nc{4} \\
 \nc{5} & \ci{6} & \nc{7} & \nc{8} \\
 \nc{9} & \ci{10} & \nc{11} & \nc{12} \\
 \nc{13} & \ci{14} & \nc{15} & \nc{16} \\
 \nc{17} & \nc{18} & \nc{19} & \nc{20} \\
} 
\]
From the abacus, we see that $w^0$ has Coxeter length $1+10+0+0 = 11$.  For
example, the ten gaps preceding the lowest bead in runner 2 are $13, 12, 11, 9, 8,
7, 5, 4, 3$, and $0$.  Hence, $\widetilde{w}$ has length $\l(w^0) + \l(w) = 13$.
\end{example}

In this work, we are primarily concerned with the fully commutative affine
permutations.  Green has given a criterion for these in terms of the complete
notation for $w$.  His result is a generalization of a theorem from
\cite{b-j-s} which states that $w \in S_n$ is fully commutative if and only if
$w$ avoids $[321]$ as a permutation pattern.  

\begin{theorem}{\cite{green-321}}\label{t:321-av}
Let $w \in \tld{S}_n$.  Then, $w$ is fully commutative if and only if 
there do not exist integers $i < j < k$ such that $w(i) > w(j) > w(k)$.
\end{theorem}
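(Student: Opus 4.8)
The plan is to reduce the affine statement to a combination of Stembridge's heap-theoretic characterization of full commutativity and the classical type~$A$ result from \cite{b-j-s}, using the complete notation of $w$ as a bijection $\Z \to \Z$. Recall that an element is \emph{not} fully commutative precisely when some reduced expression contains a short braid $s_i s_{i\pm 1} s_i$; equivalently, by Stembridge's theory of heaps, the heap of some commutativity class contains a ``chain'' of three elements labeled $i, i\pm 1, i$ (mod $n$). So the essential task is to translate ``the heap contains a short braid $s_i s_{i+1} s_i$'' into the order-pattern condition ``there exist $a < b < c$ with $w(a) > w(b) > w(c)$'' on the complete notation.

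First I would establish the easy direction: if there exist integers $a < b < c$ with $w(a) > w(b) > w(c)$, then $w$ is not fully commutative. The idea is to use the action of $\tld S_n$ on $\Z$ and the periodicity $w(i+n) = w(i) + n$ to normalize. Since only residues mod $n$ matter for which generator moves which entry, and since $w(a) > w(b) > w(c)$ forces $a, b, c$ to have been moved past each other, one shows directly that any reduced word for $w$ must contain a short braid: pick the relevant right descent, peel it off, and induct on length, tracking how the descending subsequence $(a,b,c)$ persists. Concretely, one argues that $w$ has a descent at some position $i$ (Coxeter-theoretically $s_{i \bmod n}$ is a right descent) lying ``between'' two of the three indices, apply $s_i$, and check that the resulting element still has a length-3 descending configuration in complete notation — giving by induction a reduced word with $\l(w) - \l(w s_i)$ bookkeeping that produces the forbidden $s_j s_{j\pm1} s_j$. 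The base case is when $n(w)$ is small enough that the configuration lives in a single window, where it reduces to the finite $[321]$-avoidance result of \cite{b-j-s}.

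For the converse — the substantive direction — I would argue the contrapositive: if $w$ is not fully commutative, produce $a < b < c$ with $w(a) > w(b) > w(c)$. Take a reduced expression $\w$ whose heap contains a short braid, say three heap elements labeled $s_i, s_j, s_i$ with $j = i \pm 1$ and the first $s_i$ below $s_j$ below the second $s_i$ in the heap order. Reading the heap as a sequence of moves acting on $\Z$, the first $s_i$ swaps two entries occupying (the classes of) positions $i, i+1$; the intervening $s_j$ then interacts with one of these; and the second $s_i$ swaps again. Tracking the three integers that occupy these slots through the portion of the word recorded by the heap, one checks that their relative order after applying all of $w$ realizes a strict descending chain of length three across some three integer positions $a < b < c$ (using periodicity to lift out of one window if necessary). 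This is essentially the affine lift of the argument in \cite{b-j-s} that a short braid in type $A$ forces a $[321]$ pattern.

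The main obstacle is this converse direction, and specifically the bookkeeping needed to locate the three \emph{integer} positions $a < b < c$ in $\Z$ once a short braid is identified in the heap. Unlike the finite case, the three entries involved in the short braid need not lie in a single base window, so one must use $w(i+n) = w(i)+n$ to translate copies of entries into appropriate windows while preserving strict inequalities — and one must be careful that the shifts by multiples of $n$ do not accidentally reverse an inequality. Managing the interaction between the cyclic index arithmetic (indices mod $n$ in the short braid $s_i s_{i\pm1} s_i$) and the genuinely linear order on $\Z$ is where the real work lies; the rest is a routine induction on Coxeter length together with an appeal to Theorem of \cite{b-j-s} as the finite base case.
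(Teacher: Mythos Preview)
The paper does not prove this theorem at all: it is quoted verbatim as a result of Green \cite{green-321} and used as a black box throughout Section~\ref{s:background} and Section~\ref{s:enumeration}. There is therefore no ``paper's own proof'' to compare your attempt against.

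That said, your outline is a plausible route toward an independent proof, but as written it has real gaps rather than just omitted routine details. In your ``easy'' direction you assert that after peeling off a right descent $s_i$ the element $w s_i$ ``still has a length-3 descending configuration,'' and that the base case reduces to a single window and hence to \cite{b-j-s}. Neither claim is justified: applying $s_i$ can destroy the particular $[321]$-instance you started with (it swaps two adjacent values), so you must argue that \emph{some} $[321]$-instance survives, and in the affine setting the induction need not bottom out inside one window since a $[321]$-instance can straddle windows at every stage. In the converse direction your description of ``tracking the three integers'' through the heap is essentially a promise rather than an argument; the delicate point you yourself flag --- that cyclic index arithmetic on the generators must be reconciled with the linear order on $\Z$ without the $n$-shifts reversing an inequality --- is exactly what Green's proof handles, and you have not indicated how you would do it. If you want a self-contained proof, you should consult \cite{green-321} directly; for the purposes of this paper, citing the result as the authors do is entirely appropriate.
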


Observe that even though the entries in the base window of a minimal length
coset representative are sorted, the element may not be fully commutative
by Theorem~\ref{t:321-av}.  For example, if we write the element $w^0 = [-4, -1,
1, 14]$ in complete notation
\[ w^0 = (\ldots, -8, -5, -3, {\bf 10}, -4, -1, {\bf 1}, 14, {\bf 0}, 3, 5, 18, \ldots) \]
we obtain a $[321]$-instance as indicated in boldface.

In order to more easily exploit this phenomenon, we slightly modify the
construction of the abacus.  Observe that the length formula in
Proposition~\ref{p:length} depends only on the relative positions of the beads
in the abacus, and is unchanged if we shift every bead in the abacus exactly
$k$ positions to the right in reading order.  Moreover, each time we shift the
beads one unit to the right, we change the sum of the entries occurring on the
lowest bead in each runner by exactly $n$.  In fact, this shifting corresponds
to shifting the base window inside the complete notation.  Therefore, we may
define an abacus in which all of the beads are shifted so that position $n+1$
becomes the first gap in reading order.  We call such abaci \em normalized\em.
Although the entries of the lowest beads in each runner will no longer sum to
${ {n+1} \choose 2}$, we can reverse the shifting to recover the balanced
abacus.  Hence, this process is a bijection on abaci, and we may assume from
now on that our abaci are normalized.

\begin{proposition}\label{p:long_short}
Let $A$ be a normalized abacus for $w^0 \in \widetilde{S}_n / S_n$, and suppose
the last bead occurs at entry $i$.  Then, $w^0$ is fully commutative if and only
if the lowest beads on runners of $A$ occur only in positions that are a subset
of $\{1, 2, \ldots, n\} \cup \{ i-n+1, i-n+2, \ldots, i \}$.
\end{proposition}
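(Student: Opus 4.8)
The plan is to extract Green's criterion (Theorem~\ref{t:321-av}) directly from the abacus. Recall that a minimal length coset representative satisfies $w^0(1) < w^0(2) < \cdots < w^0(n)$, so $w^0$ is increasing on every window $\{an+1,\dots,an+n\}$ and satisfies $w^0(an+a') = w^0(a') + an$. Hence, if $p < q < r$ is a $[321]$-instance in the complete notation of $w^0$, i.e.\ $w^0(p) > w^0(q) > w^0(r)$, then $p$, $q$, $r$ lie in three distinct windows, with the window of $p$ preceding that of $q$, which precedes that of $r$: two positions in a common window carry values in increasing order, incompatible with a strict decrease.

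Writing $p = an+a'$, $q = bn+b'$, $r = cn+c'$ with $a',b',c'\in\{1,\dots,n\}$ and $a<b<c$, the relation $w^0(p) > w^0(q)$ gives $w^0(b') < w^0(a') - (b-a)n \le w^0(n) - n$, and $w^0(q) > w^0(r)$ gives $w^0(b') > w^0(c') + (c-b)n \ge w^0(1) + n$. I would use this to prove the reformulation: \emph{$w^0$ is fully commutative if and only if no base-window entry $e$ satisfies $w^0(1)+n < e < w^0(n)-n$}. One direction is the contrapositive of the computation just made (take $e = w^0(b')$); for the other, if $e = w^0(e')$ is such an entry with $e'\in\{1,\dots,n\}$, then $(0, e', n+1)$ is a $[321]$-instance, since $0 < e' < n+1$ and $w^0(0) = w^0(n)-n > e = w^0(e') > w^0(1)+n = w^0(n+1)$. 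Equivalently, letting $W = \{w^0(1),\dots,w^0(n)\}$ be the set of base-window entries (so $\min W = w^0(1)$ and $\max W = w^0(n)$), the element $w^0$ is fully commutative exactly when $W \subseteq \{\min W,\dots,\min W+n\}\cup\{\max W-n,\dots,\max W\}$ --- a condition unchanged when $W$ is translated by an integer.

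It remains to read this off the normalized abacus $A$. The set $W$ is precisely the set of positions of the lowest beads of the balanced abacus of $w^0$, and $A$ is obtained from that abacus by shifting every bead by a fixed amount, so the set $L$ of positions of the lowest beads of $A$ is a translate of $W$; by the translation-invariance above, $w^0$ is fully commutative iff $L \subseteq \{\min L,\dots,\min L+n\}\cup\{\max L-n,\dots,\max L\}$. Now normalization forces the endpoints. By definition the last bead is at position $\max L = i$. Since positions $1,\dots,n$ are beads, position $n+1$ is a gap, and the beads on each runner are downward closed, the lowest bead of runner~$1$ sits at position~$1$ and every lowest bead sits at a position $\ge 1$; hence $\min L = 1$. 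Thus $\{\min L,\dots,\min L+n\} = \{1,\dots,n+1\}$ and $\{\max L-n,\dots,\max L\} = \{i-n,\dots,i\}$. The two extra endpoints are irrelevant: $n+1 \notin L$ because it is a gap, and $i-n \notin L$ because $i-n$ lies on the same runner as the last bead but strictly below it, making it a bead that is not lowest on its runner. Discarding them gives $L \subseteq \{1,\dots,n\}\cup\{i-n+1,\dots,i\}$, the claim.

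The step I expect to demand the most care is the middle one: converting Green's three-term pattern condition into the interval condition on base-window entries, while keeping the balanced and normalized abaci separate so that the interval endpoints come out as the normalization-forced values $1$ and $i$ rather than $\min W$ and $\max W$. The pattern-to-abacus dictionary and the endpoint bookkeeping are then routine once that reformulation is in hand.
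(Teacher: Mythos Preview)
Your proof is correct and follows essentially the same approach as the paper's: both exhibit a $[321]$-instance using entries from adjacent windows when a lowest bead lies strictly between the two allowed blocks, and both verify $[321]$-avoidance otherwise (the paper by observing that every inversion is long-over-short, you by extracting the middle term of any hypothetical $[321]$-instance and showing it lands in the forbidden range). One small slip in your endpoint bookkeeping: position $i-n$ lies \emph{above} the last bead $i$ on its runner (smaller labels are higher in the abacus), not below; your conclusion that $i-n$ is a bead but not a lowest bead is nonetheless correct, so the argument stands.
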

\begin{proof}
By construction, position $n+1$ is the first gap in $A$, so the lowest bead on
runner $1$ occurs at position $1$, and all of the positions $2, 3, \ldots, n$
are occupied by beads.  Suppose there exists a lowest bead at position $j$ with
$n < j < i - n + 1$, and consider the complete notation for $w^0$ obtained by
arranging the positions of the lowest beads in each runner sequentially in the
base window.  We obtain a $[321]$-instance in positions $i-n$ from the window
immediately preceding the base window, $j$ from the base window, and $n+1$ from
the window immediately succeeding the base window.  Hence, $w^0$ is not fully
commutative by Theorem~\ref{t:321-av}.

Otherwise, there does not exist $j$ such that $n < j < i - n + 1$.  Hence, each
of the entries in the base window belongs to $\{1, 2, \ldots, n\}$ in which case
we say that the entry is \em short\em, or it belongs to $\{ i-n+1, i-n+2,
\ldots, i\}$ in which case we say that the entry is \em long\em.  If $i \geq
2n$, then these designations are disjoint; otherwise, some entries may be both
short and long.  For example, in the first normalized abacus shown
Figure~\ref{fig:runners} below, the entries $1$, $3$, and $4$ are short while entries
$12$ and $10$ are long.  The corresponding complete notation is $(\ldots, 7
| 1, 3, 4, 10, 12 | 6, 8, 9, 15, 17 | 11, \ldots)$.

Because $w^0$ is constructed with an increasing base window, any entry $w^0(b)$
that is equivalent mod $n$ to one of the short entries has the property that
$w^0(c) > w^0(b)$ for all $c > b$.  Therefore, the only inversions $w^0(a) >
w^0(b)$ for $a < b$ in the complete notation occur between an entry $w^0(a)$
that is equivalent mod $n$ to a long entry with an entry $w^0(b)$ that is
equivalent mod $n$ to a short entry.  Hence, $w^0$ is $[321]$-avoiding, from which it follows that $w^0$ is fully
commutative by Theorem~\ref{t:321-av}.
\end{proof}

We distinguish between two types of fully commutative elements through the position of the last bead in its normalized abacus $A$.  If the last bead occurs in a position $i>2n$, then we call the element a {\em long} element.  Otherwise, the last bead occurs in a position $n\leq i\leq 2n$, and we call the element a {\em short} element.  As evidenced in Section \ref{s:enumeration}, the long fully commutative elements have a nice structure that allows for an elegant enumeration; the short elements lack this structure.


\subsection{$q$-analogs of binomial coefficients}
Calculations involving $q$-analogs of combinatorial objects often involve
$q$-analogs of counting functions.  A few standard references on the subject
are \cite{Andrews,GouldenJackson,ec1}.  Define
$(a,q)_n=(1-a)(1-aq)\cdots(1-aq^{n-1})$ and $(q)_n=(q,q)_n$.  The {\em
$q$-binomial coefficient} $\qbinom{n}{k}$ (also called the {\em Gaussian
polynomial}) is a $q$-analog of the binomial coefficient $\binom{n}{k}$.  To
calculate a $q$-binomial coefficient directly, we use the formula
\begin{equation}\label{e:q_binomial}
\qbinom{n}{k}=\frac{(1-q^n)(1-q^{n-1})\cdots(1-q^{n-k+1})}{(1-q^k)(1-q^{k-1})\cdots(1-q^{1})}=\frac{(q)_n}{(q)_k(q)_{n-k}}.
\end{equation}
Just as with ordinary binomial coefficients, $q$-binomial coefficients have multiple combinatorial interpretations and satisfy many identities, a few of which are highlighted below.
\begin{interpretation}{\cite[Proposition 1.3.17]{ec1}}
Let $M$ be the multiset $M=\{1^k,2^{n-k}\}$. For an ordering $\pi$ of the $n$ elements of $M$, the number of inversions of $\pi$, denoted $\inv(\pi)$, is the number of instances of two entries $i$ and $j$ such that $i<j$ and $\pi(i)>\pi(j)$.  Then $\qbinom{n}{k}=\sum_{\pi} q^{\inv(\pi)}$.
\label{interp:intertwining}
\end{interpretation}
\begin{interpretation}{\cite[Proposition 1.3.19]{ec1}}
Let $\Lambda$ be the set of partitions $\lambda$ whose Ferrers diagram fits inside a $k\times(n-k)$ rectangle.  Then $\qbinom{n}{k}=\sum_{\lambda\in\Lambda}q^{|\lambda|}$, where $|\lambda|$ denotes the number of boxes in the diagram of $\lambda$.
\label{interp:Ferrers}
\end{interpretation}
\begin{interpretation}
Let $\subsets{n}{k}$ be the set of subsets of $[n] = \{1, 2, \ldots, n\}$ of
size $k$. Given $\calR=\{r_1,\ldots,r_k\}\in \subsets{n}{k}$, define
$|\calR|=\sum_{j=1}^k (r_j-j)$. Then $\qbinom{n}{k}=\sum_{\calR\in\subsets{n}{k}}q^{|\calR|}$.
\label{interp:subsets}
\end{interpretation}
\begin{proof}
There is a standard bijection between the diagram of a partition $\lambda$ drawn
in English notation inside a $(n-k) \times k$ rectangle and lattice paths of
length $n$ consisting of down and left steps that contain $k$ left steps.  This
bijection is given by tracing the lattice path formed by the boundary of the
partition $\lambda$ from the upper right to the lower left corners of the
bounding rectangle.  We can obtain another bijection to subsets
$\calR=\{r_1,\ldots,r_k\}\in \subsets{n}{k}$ by recording the index $r_j \in \{1,
2, \ldots, n\}$ of the horizontal steps of the path in $\calR$ for each $j = 1,
\ldots, k$.  Then, the number of boxes of $\lambda$ that are added in the column
above each horizontal step is precisely $(r_j - j)$.  Hence,
Interpretation~\ref{interp:subsets} follows from
transposing the Ferrers diagrams in Interpretation~\ref{interp:Ferrers}.
\end{proof}

Interpretation~\ref{interp:intertwining} is used most frequently in this
article.  Interpretation~\ref{interp:subsets} is used in Section~\ref{s:long}
when counting long fully commutative elements.

The following identities follow directly from Equation~\ref{e:q_binomial}.

\begin{identity}
$\displaystyle \qbinom{n}{k}=\qbinom{n}{n-k}.$
\label{i:n-k}
\end{identity}
\begin{identity}
$\displaystyle (1-q^{n-k})\qbinom{n}{k}=(1-q^{n})\qbinom{n-1}{k}.$
\label{i:convert}
\end{identity}


\section{Decomposition and enumeration of fully commutative elements}
\label{s:enumeration}

Let $S_n^{FC}$ denote the set of fully commutative permutations in $S_n$.  In
the following result, Barcucci et al. enumerate these elements by Coxeter
length.

\begin{theorem}{\cite{barcucci}} Let $C(x,q)=\sum_{n \geq 0} \sum_{w \in
    S_n^{FC}} x^n q^{\l(w)}$.  Then,
\[ C(x,q) = \frac{\sum_{n\geq 0}(-1)^nx^{n+1}q^{(n(n+3))/2}/(x,q)_{n+1}(q,q)_n}
{\sum_{n\geq 0}(-1)^nx^{n}q^{(n(n+1))/2}/(x,q)_{n}(q,q)_n} \]
\label{t:barcucci}
\end{theorem}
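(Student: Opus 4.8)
The plan is to prove this along the lines that also drive the affine computation in Section~\ref{s:enumeration}: reduce to a statement about inversions of pattern-avoiding permutations, extract from a combinatorial recursion a functional equation with one catalytic variable, and solve the resulting linear $q$-difference equation by iteration, as in \cite{MBMcolumnconvex}. First I would apply the Billey--Jockusch--Stanley theorem \cite{b-j-s} (the $S_n$ case of Theorem~\ref{t:321-av}) to identify $S_n^{FC}$ with the $[321]$-avoiding permutations of $S_n$, for which $\l(w)=\inv(w)$. Writing $w$ for a $[321]$-avoiding permutation and $|w|$ for its size, this makes $1+C(x,q)=\sum_{w}x^{|w|}q^{\inv(w)}$ with the sum over all $[321]$-avoiding permutations, the empty one included; the summand $1$ is the contribution of the empty permutation, and the stated formula, having no constant term, records $C(x,q)$ itself.

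Next I would generate $[321]$-avoiding permutations by inserting the largest letter. If $w\in S_m$ is $[321]$-avoiding with last descent in position $d$ (taking $d=0$ if $w$ is increasing), then inserting $m+1$ so that it becomes the $i$-th entry gives a $[321]$-avoiding permutation of $S_{m+1}$ exactly when $d<i\le m+1$: a new $321$-pattern would use $m+1$ as its largest entry and hence force a descent among the entries placed to its right. With $a(w)=m-d$ (the number of entries after the last descent, $=m$ when $w$ is increasing) there are $a(w)+1$ admissible insertions; placing $m+1$ at the end creates no new inversion and yields $a=a(w)+1$, while placing it so as to create $j\ge1$ new inversions yields $a=j$ exactly. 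This is the succession rule $(a)\to(a+1),(1),(2),\dots,(a)$, weighted by $q^{0}$ on the child $(a+1)$ and by $q^{j}$ on the child $(j)$ for $1\le j\le a$, rooted at the empty permutation labeled $(0)$. Checking precisely how $a$ transforms under each insertion, and that each new inversion is counted once, is the step I would verify most carefully.

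Summing the weighted rule over all parents, with $F(x,y,q)=\sum_{w}x^{|w|}y^{a(w)}q^{\inv(w)}$, produces the functional equation
\[ F(x,y,q)=1+xy\,F(x,y,q)+\frac{xyq}{1-yq}\bigl(F(x,1,q)-F(x,yq,q)\bigr),\qquad F(x,1,q)=1+C(x,q). \]
I would then solve this $q$-difference equation by iteration. Writing it as $F(y)=\beta(y)-\kappa(y)F(yq)$ with $\kappa(y)=\dfrac{xyq}{(1-xy)(1-yq)}$ and $\beta(y)=\dfrac{1}{1-xy}+\kappa(y)\bigl(1+C(x,q)\bigr)$, and observing that $\prod_{i=0}^{k-1}\kappa(yq^i)=\dfrac{x^{k}y^{k}q^{k(k+1)/2}}{(xy,q)_k(yq,q)_k}$ carries a factor $q^{k(k+1)/2}$ so that the remainder term tends to $0$, one obtains $F(y)=\sum_{k\ge0}(-1)^k\bigl(\prod_{i=0}^{k-1}\kappa(yq^i)\bigr)\beta(yq^k)$.

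Finally, setting $y=1$ and writing $P_k=\prod_{i=0}^{k-1}\kappa(q^i)=\dfrac{x^{k}q^{k(k+1)/2}}{(x,q)_k(q,q)_k}$, the identities $(x,q)_k(1-xq^k)=(x,q)_{k+1}$ and $(q,q)_k(1-q^{k+1})=(q,q)_{k+1}$ give $P_k\cdot\dfrac{xq^{k+1}}{(1-xq^k)(1-q^{k+1})}=P_{k+1}$, so the part of $\sum_{k}(-1)^kP_k\beta(q^k)$ carrying the unknown $1+C(x,q)$ telescopes into $\bigl(1+C(x,q)\bigr)\bigl(1-\sum_{m\ge0}(-1)^mP_m\bigr)$. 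Solving the resulting linear equation for $F(x,1,q)=1+C(x,q)$ yields
\[ 1+C(x,q)=\frac{\sum_{k\ge0}(-1)^k x^{k}q^{k(k+1)/2}/\bigl((x,q)_{k+1}(q,q)_k\bigr)}{\sum_{k\ge0}(-1)^k x^{k}q^{k(k+1)/2}/\bigl((x,q)_{k}(q,q)_k\bigr)}, \]
and subtracting $1$, then simplifying the numerator via $\dfrac{1}{(x,q)_{k+1}}-\dfrac{1}{(x,q)_k}=\dfrac{xq^k}{(x,q)_{k+1}}$ and $k(k+1)/2+k=k(k+3)/2$, gives exactly the claimed expression. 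The two genuine obstacles are pinning down the weighted succession rule of the second paragraph and justifying the infinite iteration (convergence in the formal/$q$-adic sense together with the telescoping that isolates $C(x,q)$); the innocuous ``$+1$'' from the empty permutation must be handled with care, since it is precisely what turns the symmetric-looking $x^{k}q^{k(k+1)/2}$ into the $x^{k+1}q^{k(k+3)/2}$ of the numerator.
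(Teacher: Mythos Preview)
Your argument is correct and is exactly the method of the cited source \cite{barcucci}: the weighted succession rule $(a)\to(a+1),(1),\dots,(a)$, the resulting catalytic functional equation, and its iterated solution via Lemma~\ref{l:mbm_2.3}, followed by the numerator simplification $\frac{1}{(x,q)_{k+1}}-\frac{1}{(x,q)_k}=\frac{xq^k}{(x,q)_{k+1}}$. The paper itself does not reproduce a proof of this theorem---it merely quotes the result and remarks that it arises from a recurrence solved as in \cite{MBMcolumnconvex}---so there is nothing further to compare; your approach is also precisely the one the paper deploys for its own parallel computation in Lemma~\ref{l:S_2}.
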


This formula is a ratio of $q$-Bessel functions as described in
\cite{barcucci_q_bessel}.  It arises as the solution obtained by
\cite{MBMcolumnconvex} of a recurrence relation given on the generating
function.  We will encounter such a recurrence in the proof of
Lemma~\ref{l:S_2} below.

We enumerate the fully commutative elements $\tld{w}\in \tld{S}_n$ by
identifying each as the product of its minimal length coset representative
$w^0\in \widetilde{S}_n / S_n$ and a finite permutation $w\in S_n$ as described
in Section~\ref{s:abacus}.  Recall that we decompose the set of fully
commutative elements into long and short elements.  The elements with a short
abacus structure break down into those where certain entries intertwine and
those in which there is no intertwining.  When we assemble these cases, we
obtain our main theorem.

\begin{theorem}\label{t:main}
Let $\tld{S}_n^{FC}$ denote the set of fully commutative affine permutations in
$\tld{S}_n$, and let $G(x,q)=\sum_{n \geq 0} \sum_{w \in \tld{S}_n^{FC}} x^n
q^{\l(w)}$, where $\l(w)$ denotes the Coxeter length of $w$.  Then, 
\[ G(x,q) = \Bigg(\sum_{n \geq 0} \frac{x^nq^n}{1-q^n} \sum_{k=1}^{n-1}\qbinom{n}{k}^{\,2}\Bigg) + C(x,q)+  \Bigg(\sum_{R,L\geq 1} q^{R+L-1} \qbinom{L+R-2}{L-1} S(x,q)\Bigg), \]
where $C(x,q)$ is given by Theorem~\ref{t:barcucci}, and the component parts of
$S(x,q)=S_I(x,q)+S_0(x,q)+S_1(x,q)+S_2(x,q)$ are given in Lemmas~\ref{l:S_I},
\ref{l:S_0}, \ref{l:S_1}, and \ref{l:S_2}, respectively.
\label{t:all}
\end{theorem}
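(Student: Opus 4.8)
The plan is to decompose $\tld{S}_n^{FC}$ according to the trichotomy established in Section~\ref{s:abacus}, and then sum the generating function contributions from each piece. By Proposition~\ref{p:long_short} and the paragraph following it, every fully commutative affine permutation is identified with a pair $(w^0, w) \in (\widetilde{S}_n/S_n) \times S_n$ in which the lowest beads of the normalized abacus $A$ of $w^0$ sit in the window $\{1,\dots,n\}$ together with a trailing window $\{i-n+1,\dots,i\}$, and $w^0$ is \emph{short} when its last bead lies in position $n \le i \le 2n$ (so the two windows overlap or abut) and \emph{long} when $i > 2n$. First I would handle the three cases separately: (i) the finite permutations $w \in S_n^{FC}$ themselves, viewed as affine permutations whose coset representative is the identity; these contribute exactly $C(x,q)$ by Theorem~\ref{t:barcucci}. (ii) The short elements where the long and short windows genuinely overlap, i.e.\ $i < 2n$, with nontrivial intertwining of entries; I expect this to split further into the subcases $S_I, S_0, S_1, S_2$ recorded in the four lemmas, distinguished by how many beads lie strictly inside the overlap region and the resulting constraints, summing to $S(x,q)$. (iii) The long elements, $i > 2n$, whose structure (per the remark at the end of Section~\ref{s:abacus}) is rigid enough to enumerate directly.

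For the first summand, $\sum_{n\ge 0}\frac{x^nq^n}{1-q^n}\sum_{k=1}^{n-1}\qbinom{n}{k}^2$, I would isolate the short elements in which the two windows coincide (the degenerate case $i = n$, so the entire bead configuration is determined by one subset of runners). Here one chooses a $k$-element subset of runners to carry a ``long'' lowest bead and an independent subset governing the finite permutation $w$ that sorts them; Interpretation~\ref{interp:subsets} or Interpretation~\ref{interp:intertwining} converts each independent choice into a factor of $\qbinom{n}{k}$, giving the square, while the factor $q^n/(1-q^n) = \sum_{m\ge 1} q^{mn}$ accounts for the geometric freedom in how far down the shared window sits (equivalently, the level shift that does not affect full commutativity but does add $n$ to the length each time, as observed just before Proposition~\ref{p:long_short}). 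The range $1 \le k \le n-1$ excludes the empty and full subsets, which correspond to the identity coset representative already counted in $C(x,q)$.

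For the long elements I would parametrize by $R \ge 1$ and $L \ge 1$, the numbers of runners carrying, respectively, a short lowest bead and a long lowest bead (with the last bead itself in the long window), together with the offset $i - 2n \ge 0$. The prefactor $q^{R+L-1}\qbinom{L+R-2}{L-1}$ should arise as follows: placing the last bead and accounting for the minimal gap structure forced by normalization contributes the power $q^{R+L-1}$, while the $q$-binomial counts the inversions (via Interpretation~\ref{interp:intertwining}) among the $R+L-2$ remaining short/long runner-labels once the extremal ones are fixed; the residual generating function $S(x,q)$ then records the remaining degrees of freedom — the choice of the long window's depth and the compatible finite permutation — uniformly across all $(R,L)$, which is exactly why it factors out of the double sum. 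The main obstacle, and the place where the four lemmas do the real work, is verifying that this factorization is clean: one must check that after fixing $(R,L)$ the set of admissible $(w^0,w)$ pairs is in length-preserving bijection with a configuration space whose generating function is independent of $(R,L)$, and that the case analysis $S_I + S_0 + S_1 + S_2$ exhausts the overlapping short configurations without double-counting against $C(x,q)$ or against the first summand. Establishing that bijection — tracking how the normalized-abacus length formula of Proposition~\ref{p:length} interacts with the sorting permutation $w$ and confirming via Theorem~\ref{t:321-av} that exactly these configurations avoid $[321]$ — is the crux; once it is in place, assembling $G(x,q)$ is just adding the three contributions.
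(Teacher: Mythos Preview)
You have the roles of the first and third summands reversed, and this misidentification propagates into explanations that do not match the actual structure.

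The first summand $\sum_{n}\frac{x^nq^n}{1-q^n}\sum_{k=1}^{n-1}\qbinom{n}{k}^2$ counts the \emph{long} elements, not short ones; this is Lemma~\ref{l:long}. For a long element one fixes a set $\calR$ of $k$ long runners, and then there is an \emph{infinite} family $\{A_i^{\calR}\}_{i\ge 1}$ of abaci obtained by stacking additional beads on those runners; the geometric factor $q^n/(1-q^n)$ records precisely this infinite family. Your proposed interpretation --- ``short elements in which the two windows coincide, the degenerate case $i=n$'' --- describes the identity coset representative (all lowest beads in $\{1,\dots,n\}$), which is the $(n)(0)(0)$ abacus already absorbed into $C(x,q)$; it is a single abacus, not an infinite family, so the $1/(1-q^n)$ has no source there.

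Conversely, the third summand $\sum_{R,L\ge 1}q^{R+L-1}\qbinom{L+R-2}{L-1}\,S(x,q)$ counts the \emph{short} elements with at least one $\R$-entry. The prefactor $q^{R+L-1}\qbinom{L+R-2}{L-1}$ is the total contribution of all $(L)(M)(R)$ abaci to the coset-representative length (Proposition~\ref{p:fixedLMR}), and $S(x,q)$ is the generating function for the finite permutations $w$ that may be applied, summed internally over the number $M$ of $\M$-entries. The four pieces $S_I,S_0,S_1,S_2$ are distinguished not by ``how many beads lie strictly inside the overlap region'' but by how $w$ rearranges the $\L$-, $\M$-, $\R$-entries in the base window: $S_I$ is the case where some $\R$ lands left of some $\L$, and $S_0,S_1,S_2$ are the non-intertwined cases according to whether the $\M$-entries have zero, one, or at least two descents. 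The clean factorization you anticipated does hold, but its justification is Proposition~\ref{p:lmr}, which shows that the set of admissible $w$ depends only on $(L,M,R)$ and not on the particular abacus --- and this is a statement about short elements, not long ones.
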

The first summand of $G(x,q)$ counts the long elements, while the remaining
summands count the short elements.  This theorem will be proved in
Section~\ref{s:main_proof} below.


\subsection{Long elements}
\label{s:long}

In this section, we enumerate the long elements.  Recall that the last bead in the normalized abacus for these elements occurs in position $> 2n$.

\begin{lemma}
For fixed $n \geq 0$, we have
\[ \sum_{w \in \tld{S}_n^{FC} \text{ such that $w$ is long }} q^{\l(w)} = \frac{q^n}{1-q^n}\sum_{k=1}^{n-1}\qbinom{n}{k}^{\,2}. \]
\label{l:long}
\end{lemma}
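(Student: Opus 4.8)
The plan is to parametrize long fully commutative elements by the data in their normalized abaci and then sum the length-generating function over that data. By Proposition~\ref{p:long_short}, a long element $w^0 \in \tld{S}_n/S_n$ has all its lowest beads in $\{1,\ldots,n\} \cup \{i-n+1,\ldots,i\}$ where $i>2n$ is the position of the last bead; since $i>2n$ these two blocks are disjoint. Runner $1$'s lowest bead is forced to position $1$ and positions $2,\ldots,n$ are all occupied, so the ``short'' block $\{1,\ldots,n\}$ contributes nothing to the length. The essential data is: the block of $n$ consecutive positions $\{i-n+1,\ldots,i\}$ (equivalently, its offset, governed by how far down the ``long'' runners descend, a common amount measured by some $m\ge 1$), together with a choice of which runners carry a long lowest bead. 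Write $k$ for the number of long runners, so $k$ ranges over $1,\ldots,n-1$ (we need at least one long bead for the element to be long, and not all $n$ can be long since runner $1$ is short). First I would set up this bijection carefully, computing $\l(w^0)$ via Proposition~\ref{p:length} as a function of $k$, the set of long runners, the set of short runners (other than runner~$1$), and the common descent depth.

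Next I would compute the contribution to the length. Using Proposition~\ref{p:length}, the number of gaps before each lowest bead decomposes into a ``bulk'' term proportional to the descent depth — this is where the $q^n/(1-q^n)$ factor comes from, as $\sum_{m\ge 1} q^{mn} = q^n/(1-q^n)$ once we sum over the depth $m\ge 1$ — plus two ``shape'' terms recording how the $k$ long runners are interleaved among the columns $1,\ldots,n$ near the top of the long block and how they are interleaved near the bottom. Each of these two interleaving contributions is exactly an inversion statistic on a word with $k$ copies of one letter and $n-k$ copies of another, so by Interpretation~\ref{interp:intertwining} (or equivalently Interpretation~\ref{interp:subsets}, which the text flags as the relevant tool for long elements) each sums to $\qbinom{n}{k}$. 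Since the choices at the top and the bottom are independent, together they give $\qbinom{n}{k}^2$, and summing over $k$ from $1$ to $n-1$ and over the depth $m\ge 1$ yields
\[ \sum_{m \geq 1} q^{mn} \sum_{k=1}^{n-1} \qbinom{n}{k}^{\,2} = \frac{q^n}{1-q^n} \sum_{k=1}^{n-1} \qbinom{n}{k}^{\,2}, \]
which is the claimed formula.

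I would check the two boundary cases: $n=0$ and $n=1$ should give an empty sum (indeed $\sum_{k=1}^{-1}$ and $\sum_{k=1}^{0}$ are empty), matching the fact that there are no long elements when $n\le 1$; and I would verify the base of the recursion/indexing against the small example in Figure~\ref{fig:runners} referenced in the text, confirming that the complete notation $(\ldots,7 \mid 1,3,4,10,12 \mid 6,8,9,15,17 \mid 11,\ldots)$ corresponds to $n=5$, $k=2$ long runners, depth $m=1$, and the predicted length.

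The main obstacle I anticipate is bookkeeping the gap count in Proposition~\ref{p:length} precisely enough to see the clean split into ``depth $\times n$'' plus ``top interleaving'' plus ``bottom interleaving,'' and in particular making sure no cross terms appear between the depth and the two shape statistics, and that the top and bottom interleavings are genuinely independent (i.e., there is no constraint linking which runners are long at the top versus the bottom — since ``long'' is a property of a runner, not of a position, the same $k$-subset governs both ends, but the positions it occupies within the length-$n$ block at the top and at the bottom are recorded by two a priori different words, each ranging over all $\binom{n}{k}$ shuffles, which is what produces the square). Getting the reading-order arithmetic right for beads spanning multiple levels is the place where a sign or off-by-$n$ error is easiest to make, so I would do that computation with explicit care, ideally anchored to the worked example.
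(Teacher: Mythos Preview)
Your overall strategy---parametrize long elements by abacus data and sum---is the right one, but the decomposition you propose has a genuine gap and will not produce the formula as stated.

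The most serious omission is the finite permutation $w$ in the parabolic decomposition $\tld{w} = w^0 w$. You only compute $\l(w^0)$ via Proposition~\ref{p:length}, but the lemma counts all long $\tld{w} \in \tld{S}_n^{FC}$, not just coset representatives. In the paper's argument, one factor of $\qbinom{n}{k}$ comes from the finite permutations: the base window of $w^0$ has the form $[a\cdots a\, b\cdots b]$ with $n-k$ entries $\leq n$ and $k$ entries $\geq n+2$, and the admissible $w \in S_n$ are exactly the shuffles of the $a$'s with the $b$'s (any inversion within either block creates a $[321]$ with an entry in an adjacent window), contributing $\qbinom{n}{k}$ by Interpretation~\ref{interp:intertwining}.

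Your proposed source of the square---two independent ``top'' and ``bottom'' interleavings in the abacus of $w^0$---does not exist. The abacus is determined by the single choice of long runners $\calR \subset \{2,\ldots,n\}$ together with the number $i \geq 1$ of beads placed beyond position $2n$; once these are fixed, the positions of the lowest beads in the final block $\{i-n+1,\ldots,i\}$ are forced (they are a cyclic shift of $\calR$), so there is no second independent shuffle. Summing $q^{\l(w^0)}$ over $\calR$ with $|\calR|=k$ gives $\qbinom{n-1}{k}$ via Interpretation~\ref{interp:subsets} (not $\qbinom{n}{k}$, since runner $1$ is never long), and each additional bead contributes $n-k$ to the length, not $n$: the depth sum is $\sum_{i \geq 1} q^{i(n-k)} = q^{n-k}/(1-q^{n-k})$, not $q^n/(1-q^n)$. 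The clean final form $\qbinom{n}{k}^2 \cdot q^n/(1-q^n)$ arises only after the algebraic rewriting $\qbinom{n-1}{k}/(1-q^{n-k}) = \qbinom{n}{k}/(1-q^n)$ from Identity~\ref{i:convert}; it does not have the direct bijective interpretation you describe.
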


\begin{proof}
Fix a long fully commutative element, and define the set of {\em long runners}
$\calR$ of its normalized abacus $A$ to be the set of runners
$\{r_1,\ldots,r_k\}\subset [n]\setminus\{1\}$ in which there exists a bead in
position $n+r_j$ for $1\leq j\leq k$.   We will enumerate the long fully
commutative elements by conditioning on $k=|\calR|$, the size of the set of long
runners of its normalized abacus.  Note that by Proposition~\ref{p:long_short},
all subsets $\calR\subset[n]\setminus\{1\}$ are indeed the set of long runners
for some fully commutative element.

For a fixed $\calR$, there is an infinite family of abaci
$\{A_i^{\calR}\}_{i\geq 1}$, each having beads in positions $n+r_j$ for $r_j\in
\calR$, together with $i$ additional beads that are placed sequentially in the
long runners in positions larger than $2n$.  See Figure~\ref{fig:runners} for
an example.

\begin{figure}
\[ 
\abacus {
 \ci{1} &  \ci{2} & \ci{3} & \ci{4} & \ci{5} \\
 \nc{6} & \ci{7} & \nc{8} & \nc{9} & \ci{10} \\
 \nc{11} &  \ci{12} & \nc{13} & \nc{14} & \nc{15} \\
 \nc{16} & \nc{17} & \nc{18} & \nc{19} & \nc{20} \\
} 
\quad
\abacus {
 \ci{1} &  \ci{2} & \ci{3} & \ci{4} & \ci{5} \\
 \nc{6} & \ci{7} & \nc{8} & \nc{9} & \ci{10} \\
 \nc{11} &  \ci{12} & \nc{13} & \nc{14} & \ci{15} \\
 \nc{16} & \nc{17} & \nc{18} & \nc{19} & \nc{20} \\
} 
\quad
\abacus {
 \ci{1} &  \ci{2} & \ci{3} & \ci{4} & \ci{5} \\
 \nc{6} & \ci{7} & \nc{8} & \nc{9} & \ci{10} \\
 \nc{11} &  \ci{12} & \nc{13} & \nc{14} & \ci{15} \\
 \nc{16} & \ci{17} & \nc{18} & \nc{19} & \nc{20} \\
} 
\]
\caption{The first three members---$A_1^{\calR}$, $A_2^{\calR}$, and $A_3^{\calR}$---of the infinite family of abaci $\{A_i^{\calR}\}$ with long runners $\calR=\{2,5\}$.  These abaci correspond to certain long fully commutative elements of $\tld{S}_5$.}
\label{fig:runners}
\end{figure}

By Proposition \ref{p:length}, the Coxeter length of the minimal length coset
representative $w^0\in \widetilde{S}_n / S_n$ having $A_i^{\calR}$ as its
abacus is $i(n-k) +\sum_{j=1}^k (r_j-j)$.  In addition, $w^0$ has base window
$[aa\cdots abb\cdots b]$, where the $(n-k)$ numbers $a$ are all at most $n$,
and the $k$ numbers $b$ are all at least $n+2$.  The finite permutations $w$
that can be applied to this standard window may not invert any of the larger
numbers ($b$'s) without creating a $[321]$-pattern with $n+1$ in the window
following the standard window.   Similarly, none of the $a$'s can be inverted.
All that remains is to intersperse the $a$'s and the $b$'s, keeping track of
how many transpositions are used.  This contributes exactly $\qbinom{n}{k}$ to
the Coxeter length, by Interpretation~\ref{interp:intertwining} of
$\qbinom{n}{k}$.

Therefore, the generating function for the long fully commutative elements of $\tld{S}_n$ by Coxeter length is
$$\sum_{k=1}^{n-1}\qbinom{n}{k}\sum_{i\geq 1}\sum_{\substack{\calR\subset[n]\setminus\{1\} \\ |\calR|=k}} q^{i(n-k) +\sum_{j=1}^k (r_j-j)}.$$
Taking the sum over $i$ and incorporating a factor of $q^{-k}$ into the summation in the exponent of $q$ yields
$$\sum_{k=1}^{n-1}\qbinom{n}{k}\frac{q^n}{1-q^{n-k}}\sum_{\substack{\calR\subset[n]\setminus\{1\} \\ |\calR|=k}} q^{\sum_{j=1}^k (r_j-1-j)}.$$
Reindexing the entries $r_j$ to be from $1$ to $n-1$ instead of from $2$ to $n$ gives
$$\sum_{k=1}^{n-1}\qbinom{n}{k}\frac{q^n}{1-q^{n-k}}\sum_{\substack{\calR\subset[n-1] \\ |\calR|=k}} q^{\sum_{j=1}^k (r_j-j)},$$
which simplifies by Interpretation~\ref{interp:subsets} of the $q$-binomial coefficients to
$$\sum_{k=1}^{n-1}\qbinom{n}{k}\qbinom{n-1}{k}\frac{q^n}{1-q^{n-k}}.$$
Applying Identity~\ref{i:convert} proves the desired result.
\end{proof}


\subsection{Short elements}

The normalized abacus of every short fully commutative element has a particular
structure.  There must be a gap in position $n+1$, and for runners $2\leq i
\leq n$, the lowest bead is either in position $i$ or $n+i$.  In the following
arguments, we will assign a status to each runner, depending on the position of
the lowest bead in that runner.

\begin{definition}\label{d:lmr}
An \em $\R$-entry \em is a bead lying in some position $> n$.  Let $n+j$ be the
position of the last $\R$-entry, or set $j = n$ if there are no $\R$-entries; an
\em $\M$-entry \em is a lowest bead lying in position $i$ where $j+1 \leq i \leq
n$.  Note that it is possible that there do not exist any $\M$-entries.  The \em
$\L$-entries \em are the remaining lowest beads in position $i$ for $i\leq j$.
This assigns a status \em left\em, \em middle\em, or \em right \em to each entry
of the base window, depending on the position of the lowest bead in the
corresponding runner.  We will call an abacus containing $L$ $\L$-entries, $M$
$\M$-entries, and $R$ $\R$-entries an {\em $(L)(M)(R)$ abacus}. 
\end{definition}

\begin{example}
Figure~\ref{fig:312abaci} shows the three $(3)(1)(2)$ abaci.  In each case, $6$
is the unique $\M$-entry and $11$ is an $\R$-entry.  In the first abacus, the
$\L$-entries are $\{1, 3, 4\}$ and the $\R$-entries are $\{8, 11\}$.
\end{example}

The rationale for this assignment is that in the base window of a fully
commutative element, not of type $(n)(0)(0)$, neither the $\L$-entries nor the
$\R$-entries can have a descent amongst themselves, respectively.  To see
this, consider the contrary where two $\R$-entries have a descent.  These two
entries, along with the $n+1$ entry in the window following the standard
window, form a $[321]$-instance.  Similarly, the last $\R$-entry in the window
previous to the standard window together with two $\L$-entries that have a
descent in the standard window would form a $[321]$-instance.

When the normalized abacus of a short fully commutative element has no
$\R$-entries (and therefore no $\M$-entries), the base window for its minimal
length coset representative is $[12\cdots n]$.  That is, the fully commutative
elements of $\tld{S}_n$ having this abacus are in one-to-one correspondence with
fully commutative elements of finite $S_n$.  These elements have been enumerated
in Theorem~\ref{t:barcucci}. 

\medskip
From now on, we only concern ourselves with $(L)(M)(R)$ abaci where $R>0$.
Proposition~\ref{p:lmr} proves that it is solely the parameters $L$, $M$, and
$R$ that determine the set of finite permutations that we can apply to the
minimal length coset representative, and not the exact abacus.  In
Proposition~\ref{p:fixedLMR} we determine the cumulative contribution to the
Coxeter length of the minimal length coset representative from all $(L)(M)(R)$
abaci for fixed $L$, $M$, and $R$.  

\begin{proposition}
Let $w_1^0,w_2^0\in \tld{S}_n / S_n$, each corresponding to an $(L)(M)(R)$
abacus for the same $L$, $M$, and $R$ with $R>0$. For any finite permutation
$w\in S_n$, $w_1^0 w$ is fully commutative in $\tld{S}_n$ if and only if $w_2^0
w$ is fully commutative in $\tld{S}_n$.
\label{p:lmr}
\end{proposition}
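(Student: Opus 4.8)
The plan is to show that the fully commutativity of $w_i^0 w$ depends only on the combinatorial ``shape'' of the base window of $w_i^0$, namely which positions hold $\L$-entries, $\M$-entries, and $\R$-entries, together with the relative order of entries of the same status. By Theorem~\ref{t:321-av}, $w_i^0 w$ is fully commutative if and only if its complete notation avoids $[321]$, so I want to argue that the set of $[321]$-instances in the complete notation of $w_i^0 w$ is governed entirely by $L$, $M$, $R$ and the permutation $w$. First I would record the key structural facts from Definition~\ref{d:lmr} and the surrounding discussion: in $w_i^0$ the $\L$-entries occupy an initial segment of positions in the base window and are themselves increasing, the $\M$-entries occupy a middle segment, the $\R$-entries a final segment, and (crucially) the value of every $\L$-entry is smaller than the value of every $\R$-entry once we pass to the normalized abacus, while $\M$-entries are sandwiched in value between them in a controlled way. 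The positions of the lowest beads differ between $A_1$ and $A_2$, but the \emph{order type} of the base window — an increasing block of $L$ ``small'' values, then $M$ ``middle'' values, then $R$ ``large'' values, with $n+1$ as the first gap — is identical, and the entries in the windows immediately before and after the base window are likewise order-isomorphic because shifting by $\pm n$ preserves all inequalities.

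Next I would make precise the claim that a $[321]$-instance in the complete notation of $w^0 w$ can always be ``localized.'' Since $w^0 w$ has an increasing base window pattern built from three blocks, and since the pattern repeats with the shift-by-$n$ rule, any decreasing subsequence of length three $u(a) > u(b) > u(c)$ with $a<b<c$ must, after translating everything into the fundamental window, involve entries whose statuses and relative positions are determined by $w$ acting on the block structure. Concretely, I would show: (i) a descent $u(a) > u(b)$ with $a < b$ in the same window can only occur when $a$ carries a status that is ``later'' than $b$'s (e.g.\ $\R$ before $\L$, or via the $n+1$ gap), exactly as in the discussion following Definition~\ref{d:lmr} and in the proof of Proposition~\ref{p:long_short}; (ii) any three-term decreasing subsequence therefore lies inside a bounded number of consecutive windows whose combined order type depends only on $L$, $M$, $R$, and how $w$ permutes within and across the three blocks. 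The point is that replacing $w_1^0$ by $w_2^0$ changes the actual integer entries but not a single one of these inequalities, so it sends $[321]$-instances to $[321]$-instances bijectively.

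Concretely I would carry it out as follows. Step 1: fix notation for the base window of an $(L)(M)(R)$ abacus, with positions $1,\dots,L$ the $\L$-block, $L+1,\dots,L+M$ the $\M$-block, $L+M+1,\dots,n$ the $\R$-block (using $L+M+R=n$), and note that within each block the entries of $w^0$ are strictly increasing; record also the entries just outside the base window obtained by the $\pm n$ shift. Step 2: observe that this ``profile'' of which positions are increasing-among-themselves, together with the inequalities $\L$-value $<$ $\R$-value and the placement of the first gap at $n+1$, is \emph{literally the same data} for $w_1^0$ and $w_2^0$ — only the numerical values of the $\M$-entries and the exact runner positions differ, and these differences never flip any of the relevant comparisons. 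Step 3: given $w \in S_n$, describe the complete notation of $w^0 w$ as the $w$-rearrangement of each translated window, and show that whether a triple of positions forms a $[321]$-instance is a Boolean function only of the profile and $w$. Step 4: conclude via Theorem~\ref{t:321-av} that $w_1^0 w$ is fully commutative iff the profile-plus-$w$ data admits no $[321]$, iff $w_2^0 w$ is fully commutative. I expect Step 3 to be the main obstacle: one has to be careful that a $[321]$-instance could in principle reach across several windows, so I would first prove a lemma bounding the ``spread'' of any $[321]$-instance (using that, outside the three designated blocks, the complete notation is increasing in each residue class, essentially the argument in Proposition~\ref{p:long_short}), and only then reduce to a finite check that manifestly does not see the difference between $w_1^0$ and $w_2^0$.
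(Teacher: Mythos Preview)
Your approach has a genuine gap at Step~3 (and already in the claim of Step~2). You assert that ``whether a triple of positions forms a $[321]$-instance is a Boolean function only of the profile and $w$,'' but this is false: the cross-window comparisons between an $\R$-entry in window $j$ and an $\L$-entry in window $j{+}1$ genuinely depend on which $(L)(M)(R)$ abacus one has, not just on $(L,M,R)$. Concretely, take $n=6$, $(L,M,R)=(3,1,2)$, and $w=s_5$. For the abacus with base window $[1,3,4,6,8,11]$ one gets $w_1^0 w=[1,3,4,6,11,8]$ and positions $(5,6,8)$ carry values $(11,8,9)$, which is \emph{not} a $[321]$; for the abacus with base window $[1,2,3,6,10,11]$ one gets $w_2^0 w=[1,2,3,6,11,10]$ and the same positions $(5,6,8)$ carry $(11,10,8)$, which \emph{is} a $[321]$. (Both elements are non-fully-commutative, so the proposition survives, but your proposed mechanism does not.) Incidentally, your Step~2 also misidentifies what varies: the $\M$-entries are always exactly $n{-}M{+}1,\ldots,n$ and are fixed across all $(L)(M)(R)$ abaci; it is the $\L$- and $\R$-values that move.

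What is true, and what the paper proves, is that the \emph{existence} of some $[321]$-instance is preserved; but this requires constructing a possibly different witness in $w_2^0 w$ from a given one in $w_1^0 w$. The paper does this by a short case analysis on which statuses appear in the $[321]$: if two inverted $\L$-entries (resp.\ $\R$-entries) appear in window $j$, pair them with any $\R$-entry from window $j{-}1$ (resp.\ any $\L$-entry from window $j{+}1$) to get a $[321]$ in $w_2^0 w$; if two $\M$-entries or one of each status appear, the whole $[321]$ lies in a single window and hence already witnesses that $w$ itself fails to be fully commutative. Your spread-bounding lemma is correct in spirit but does not bridge the gap; you still need this status-based reconstruction, since no order-isomorphism between the two complete notations exists.
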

\begin{proof}
For $v \in \widetilde{S}_n$ and $i \in \mathbb{Z}$, we will say that $v(i)$ has
the same left, middle, or right status as the entry $v(\textup{$i$ mod $n$})$ of
the base window, as in Definition~\ref{d:lmr}.  Observe that $(w_1^0 w)(i)$ has
the same left, middle, or right status as $(w_2^0 w)(i)$ for all $1 \leq i \leq
n$, and that the relative order of these entries is the same for $w_1^0 w$ as
for $w_2^0 w$.

Next, suppose that $w_1^0 w$ has a $[321]$-instance with two inverted
$\L$-entries or two inverted $\R$-entries.  By construction, these entries must
occur in the same window $j$.  Then any $\R$-entry from window $j-1$ yields a
$[321]$-instance in $w_2^0 w$, and such an entry exists since we are assuming
that $R>0$.  Similarly, if the $[321]$-instance has two inverted $\R$-entries
occurring in window $j$, then any $\L$-entry from window $j+1$ yields a
$[321]$-instance in $w_2^0 w$, and such an entry exists since $1$ is always an
$\L$-entry in the base window.  Hence, $w_2^0 w$ is not fully commutative by
Theorem~\ref{t:321-av}.

Next, suppose that $w_1^0 w$ has a $[321]$-instance that includes two
$\M$-entries, at least one of which lies in window $j$.  Observe that every
$\M$-entry in window $j$ is larger than every entry in window $j-1$, and
smaller than every entry in window $j+1$.  Therefore, if the $[321]$-instance
involves two $\M$-entries then the entire $[321]$-instance must occur within
window $j$, which implies that $w$ is not fully commutative.  Thus, $w_2^0 w$
is not fully commutative.

Finally, if $w_1^0 w$ has a $[321]$-instance that includes one $\R$-entry, one
$\M$-entry, and one $\L$-entry, then all three of these entries must lie in the
same window.  Hence, neither $w$ nor $w_2^0 w$ are fully commutative.

Thus, we have shown that the result holds in all cases.
\end{proof}

\begin{proposition}
Let $L$, $M$ and $R > 0$ be fixed.  Then, we have
\[ \sum_{w} q^{\l(w)} = q^{L+R-1}\qbinom{L+R-2}{L-1}. \]
where the sum on the left is over all minimal length coset representatives $w$
having an $(L)(M)(R)$ abacus.
\label{p:fixedLMR}
\end{proposition}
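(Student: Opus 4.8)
The plan is to describe the structure of an $(L)(M)(R)$ abacus explicitly, apply Proposition~\ref{p:length} to obtain a clean closed form for the Coxeter length of each corresponding minimal length coset representative, and then recognize the resulting generating function as a $q$-binomial coefficient via Interpretation~\ref{interp:subsets}.

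First I would pin down the combinatorics of an $(L)(M)(R)$ abacus $A$ with $R>0$. Write $n+j$ for the position of its last $\R$-entry. Any runner with index $>j$ must be an $\M$-runner, since a lowest bead of such a runner in a position $>n$ would lie beyond $n+j$; hence the $\M$-runners are exactly $j+1,\ldots,n$, so $M=n-j$ and $j=L+R$. The runners $1,\ldots,j$ are partitioned into the $L$ $\L$-runners and the $R$ $\R$-runners, with runner $1$ always an $\L$-runner (position $n+1$ is the first gap of a normalized abacus) and runner $j$ always the largest $\R$-runner. Because the element is short, every bead lies in position $\le 2n$, so $A$ is determined by its set of $\R$-runners $r_1<\cdots<r_R$, where $r_R=L+R$ and $\{r_1,\ldots,r_{R-1}\}$ is an arbitrary $(R-1)$-subset of $\{2,\ldots,L+R-1\}$; in particular there are $\binom{L+R-2}{R-1}$ such abaci.

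Next I would compute $\l(w)$ for the representative $w$ with abacus $A$. All positions $\le n$ are beads and all positions $>2n$ are gaps, and among positions $n+1,\ldots,2n$ the beads are precisely $n+r_1,\ldots,n+r_R$. So in the notation of Proposition~\ref{p:length} we have $m_i=0$ when runner $i$ is an $\L$- or $\M$-runner, while for the $t$-th $\R$-runner the gaps preceding its lowest bead $n+r_t$ are $n+1$ together with the non-$\R$ positions in $\{n+2,\ldots,n+r_t-1\}$, giving $m_{r_t}=r_t-t$. Therefore $\l(w)=\sum_{t=1}^{R}(r_t-t)$. Separating the term $t=R$, which equals $r_R-R=L$, and putting $s_t=r_t-1$ for $1\le t\le R-1$, this becomes $\l(w)=(L+R-1)+\sum_{t=1}^{R-1}(s_t-t)$, where $\{s_1<\cdots<s_{R-1}\}$ runs over all $(R-1)$-subsets of $[L+R-2]$ as $A$ runs over all $(L)(M)(R)$ abaci.

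Finally, summing $q^{\l(w)}$ over all $(L)(M)(R)$ abaci factors as $q^{L+R-1}$ times $\sum q^{\sum_{t=1}^{R-1}(s_t-t)}$ over all $(R-1)$-subsets $\{s_1,\ldots,s_{R-1}\}$ of $[L+R-2]$, and this last sum equals $\qbinom{L+R-2}{R-1}$ by Interpretation~\ref{interp:subsets}; rewriting it via Identity~\ref{i:n-k} as $\qbinom{L+R-2}{L-1}$ gives the claim. I expect the only delicate step to be the middle one: correctly determining which of the positions $n+1,\ldots,2n$ are gaps and translating that into the identity $m_{r_t}=r_t-t$. Everything else is bookkeeping once the abacus structure has been identified.
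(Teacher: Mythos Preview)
Your proof is correct and follows essentially the same strategy as the paper: parametrize the $(L)(M)(R)$ abaci by the positions of the $\R$-beads among the runners $2,\ldots,L+R-1$, compute the Coxeter length via Proposition~\ref{p:length}, and recognize the resulting sum as a $q$-binomial coefficient. The only minor difference is that you invoke Interpretation~\ref{interp:subsets} directly on the shifted subset $\{s_1,\ldots,s_{R-1}\}\subseteq[L+R-2]$, whereas the paper instead identifies the minimal-length $(L)(M)(R)$ abacus and counts single bead-shifts as inversions via Interpretation~\ref{interp:intertwining}.
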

\begin{proof}

Every $(L)(M)(R)$ abacus contains beads in all positions through $n$ and in
position $2n-M$ as well as gaps in position $n+1$ and all positions starting
with $2n-M+1$.  Depending on the positions of the $L-1$ remaining gaps (and
$R-1$ remaining beads), the Coxeter length of the minimal length coset
representative changes as illustrated by example in Figure \ref{fig:312abaci}.

\begin{figure}
\begin{tabular}{c}
$\abacus {
 \ci{1} &  \ci{2} & \ci{3} & \ci{4} & \ci{5} & \ci{6} \\
 \nc{7} & \ci{8} & \nc{9} & \nc{10} & \ci{11} & \nc{12}\\
}$  
\\ \\
$\l([1, 3, 4, 6, 8, 11])=4$
\end{tabular}
\quad
\begin{tabular}{c}
$\abacus {
 \ci{1} &  \ci{2} & \ci{3} & \ci{4} & \ci{5} & \ci{6} \\
 \nc{7} & \nc{8} & \ci{9} & \nc{10} & \ci{11} & \nc{12}\\
}$ 
\\ \\
$\l([1, 2, 4, 6, 9, 11])=5$
\end{tabular}
\quad
\begin{tabular}{c}
$\abacus {
 \ci{1} &  \ci{2} & \ci{3} & \ci{4} & \ci{5} & \ci{6} \\
 \nc{7} & \nc{8} & \nc{9} & \ci{10} & \ci{11} & \nc{12}\\
}$ 
\\ \\
$\l([1, 2, 3, 6, 10, 11])=6$
\end{tabular}
\caption{The three $(3)(1)(2)$ abaci and Coxeter length of their corresponding minimal length coset representatives.}
\label{fig:312abaci}
\end{figure}

The minimal length coset representative corresponding to an $(L)(M)(R)$ abacus
having beads in positions $i$ for $n+2 \leq i \leq n+R$ together with a bead at
position $2n-M$, and gaps in positions $i$ for $n+R+1 \leq i \leq 2n-M-1$ has
Coxeter length $L+R-1$.  Notice that every time we move a bead from one of the
positions between $n+2$ and $2n-M-1$ into a gap in the position directly to its
right, the Coxeter length increases by exactly one.  In essence, we are
intertwining one sequence of length $L-1$ and one sequence of length $R-1$ and
keeping track of the number of inversions we apply.  By $q$-binomial
Interpretation \ref{interp:intertwining}, the contribution to the Coxeter length
of the minimal length coset representatives corresponding to the $(L)(M)(R)$
abaci is $q^{L+R-1}\qbinom{L+R-2}{L-1}$.
\end{proof}

For the remaining arguments, we ignore the exact entries in the base window and
simply fix both some positive number $L$ of $\L$-entries and some positive
number $R$ of $\R$-entries, and then enumerate the permutations $w\in S_n$ that
we can apply to a minimal length coset representative $w^0$ with base window of
the form $[\L\cdots\L\M\cdots\M\R\cdots\R]$.  In Theorem \ref{t:all}, we sum
the contributions over all possible values of $L$ and $R$.  


\subsubsection{Short elements with intertwining}

One possibility is that after $w\in S_n$ is applied to our minimal length coset
representative $w^0$ with base window of the form
$[\L\cdots\L\M\cdots\M\R\cdots\R]$, an $\R$-entry lies to the left of an
$\L$-entry.  In this case, we say that $w$ is {\em intertwining}, the
$\L$-entries are {\em intertwining} with the $\R$-entries, and that the
interval between the leftmost $\R$ and the rightmost $\L$ inclusive is
the {\em intertwining zone}.  

\begin{lemma}
Fix $L$ and $R > 0$.  Then, we have
\[ S_I(x,q) = \sum_w x^{n(w)} q^{\l(w)} = \hspace{4.5in}\] \[ \hspace{.5in}\sum_{M\geq 0} x^{L+M+R} \sum_{\rho = 0}^{R-1} \sum_{\lambda = 0}^{L-1} \sum_{\mu = 0}^{M} q^{Q}\qbinom{M}{\mu} \qbinom{L-\lambda-1+\mu}{\mu} \qbinom{\lambda+\rho}{\lambda} \qbinom{M-\mu+R-\rho-1}{M-\mu}, \]
where the sum on the left is over all $w\in S_{\infty}$ that are intertwining and apply to a short $(L)(M)(R)$ abacus for some $M$, and $Q=(\lambda+1)(\mu+1) + (\rho+1)(M-\mu+1) -1$.
\label{l:S_I}
\end{lemma}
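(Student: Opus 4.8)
The plan is to set up explicit coordinates on the intertwining zone and then recognize the resulting length statistic as a product of four $q$-binomial generating functions via Interpretation~\ref{interp:intertwining}. Fix $L$ and $R > 0$, and consider a short $(L)(M)(R)$ abacus with base window $[\L\cdots\L\M\cdots\M\R\cdots\R]$; by Proposition~\ref{p:lmr} the exact entries do not matter, so we work with the statuses alone. Applying an intertwining $w \in S_n$ moves some $\R$-entries to the left of some $\L$-entries. First I would argue that, because of the $[321]$-avoidance constraints (Theorem~\ref{t:321-av}), the $\L$-entries among themselves must stay sorted, the $\R$-entries among themselves must stay sorted, and the $\M$-entries among themselves must stay sorted; moreover the only permitted inversions are between an $\R$-entry and an $\L$-entry, between an $\L$-entry and an $\M$-entry, or between an $\M$-entry and an $\R$-entry, and these can occur only inside the appropriate ``zone.'' So the configuration is determined by: which $\R$-entries cross to the left (say $\rho$ of them, $0 \le \rho \le R-1$ — the last $\R$-entry cannot cross without creating a pattern with $n+1$), which $\L$-entries are crossed past (say $\lambda$ of them, $0 \le \lambda \le L-1$), and how the $M$ middle entries are distributed ($\mu$ of them, $0 \le \mu \le M$, ending up involved in the intertwining). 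The $x^{L+M+R}$ simply records $n(w) = L+M+R$, and the outer sum over $M \ge 0$ comes from Proposition~\ref{p:fixedLMR} having already extracted the contribution of the coset representative itself; here we are only counting the finite permutation $w$.

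Next I would pin down the length contribution $q^{\l(w)}$. The length of $w \in S_n$ is its number of inversions, and each inversion is one of the three types above. I would partition $\l(w)$ into (i) inversions internal to the $\R$/$\L$ crossing of the $\rho$ right entries past the $\lambda$ left entries, (ii) inversions involving the $\mu$ middle entries that get pulled into the left part, (iii) inversions involving the $M-\mu$ middle entries that stay or get pulled toward the right part, and (iv) a ``base'' cost $Q$ that every such crossing incurs regardless of the internal shuffling. By Interpretation~\ref{interp:intertwining}, the internal shuffling of a block of $a$ entries against a block of $b$ entries contributes exactly $\qbinom{a+b}{a}$. Matching the four factors: $\qbinom{M}{\mu}$ is the choice/shuffle of which $\mu$ of the $M$ middle entries go with the left side; $\qbinom{L-\lambda-1+\mu}{\mu}$ shuffles those $\mu$ middle entries among the $L-\lambda-1$ uncrossed left entries; $\qbinom{\lambda+\rho}{\lambda}$ shuffles the $\rho$ crossing right entries among the $\lambda$ crossed left entries; and $\qbinom{M-\mu+R-\rho-1}{M-\mu}$ shuffles the remaining $M-\mu$ middle entries among the $R-\rho-1$ uncrossed right entries. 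The exponent $Q = (\lambda+1)(\mu+1) + (\rho+1)(M-\mu+1) - 1$ is the minimal inversion count forced by having the block of $\mu$ middle entries sit to the left of the $\lambda$-block-plus-one and the block of $M-\mu$ middle entries sit to the right relative to the $\rho$-block-plus-one; I would verify this by exhibiting the minimal-length representative of each $(\rho,\lambda,\mu)$-class directly in one-line notation and counting its inversions, checking the ``$+1$''s and the ``$-1$'' by hand on a small case such as Figure~\ref{fig:312abaci}'s parameters.

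Finally I would assemble: summing $q^{\inv}$ over all intertwining $w$ with given $(L,M,R)$ factors as $\sum_{\rho,\lambda,\mu} q^{Q} \cdot (\text{four }q\text{-binomials})$ because the four shuffles are independent once $(\rho,\lambda,\mu)$ is fixed and the cross-terms between the four blocks are precisely what $Q$ records; then summing over $M \ge 0$ with the weight $x^{L+M+R}$ gives the stated formula, and summing over $L,R$ is deferred to Theorem~\ref{t:all}. The main obstacle I anticipate is the bookkeeping in the middle step: correctly decomposing $\l(w)$ into the four independent shuffle statistics plus the fixed offset $Q$, and in particular justifying that the $\M$-entries split cleanly into a ``left-associated'' group of size $\mu$ and a ``right-associated'' group of size $M-\mu$ with no inversions between the two $\M$-subgroups and no further interaction beyond what $Q$ captures. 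This requires a careful case analysis of which pairs of entries can be inverted in a $[321]$-avoiding $w$ — essentially showing the intertwining zone has the structure of two independent ``two-block intertwinings'' glued along the middle entries — and that is where I would spend the bulk of the argument.
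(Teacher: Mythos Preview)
Your approach is essentially the paper's, but there is a genuine gap in your treatment of the $\M$-entries. You assert that ``the $\M$-entries among themselves must stay sorted,'' and later anticipate proving ``no inversions between the two $\M$-subgroups.'' Both statements are false, and they contradict your own (correct) reading of $\qbinom{M}{\mu}$ as recording the choice of which $\mu$ middle entries go to the left. The key structural fact you are missing is that no $\M$-entry can lie \emph{inside} the intertwining zone: an $\M$-entry strictly between the leftmost $\R$ and the rightmost $\L$ would form a $[321]$-instance with those two entries. Hence the $\M$-entries split into a block entirely to the left of the zone and a block entirely to the right. Within each block the $\M$-entries must indeed be sorted (a descent among the left $\M$'s together with the rightmost $\L$ gives a $[321]$; dually on the right with the leftmost $\R$), but \emph{across} the two blocks there can be inversions: any subset of size $\mu$ of the $M$ values may go left, and the inversions this creates among the $\M$-entries are precisely what $\qbinom{M}{\mu}$ counts via Interpretation~\ref{interp:intertwining}. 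So the $\M$-entries have at most one descent, and it occurs exactly at the interface between the two blocks straddling the zone.

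Once this is established, the remainder of your decomposition---the three shuffle factors and the offset $Q$---matches the paper's argument. Your parameterization of $\lambda$ and $\rho$ is also slightly off from the paper's: there $\lambda+1$ and $\rho+1$ count the $\L$'s and $\R$'s in the zone, the leftmost $\R$ and rightmost $\L$ being fixed as the zone's endpoints with only the remaining $\lambda$ $\L$'s and $\rho$ $\R$'s free to shuffle (whence $\qbinom{\lambda+\rho}{\lambda}$). Your stated reason for the bound $\rho\le R-1$ (``the last $\R$-entry cannot cross'') is not the correct one; the bound simply reflects that $\rho+1\le R$. These are bookkeeping matters, but the $\M$-entry structure is the substantive missing ingredient.
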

\begin{proof}
By Theorem~\ref{t:321-av}, there are no $\M$-entries between the leftmost $\R$
and the rightmost $\L$, because this would create a $[321]$-pattern.  So the
$\M$-entries only occur before the leftmost $\R$ and after the rightmost $\L$.

Notice that any descent in the $\M$-entries before the leftmost $\R$ would create a
$[321]$-pattern when coupled with the rightmost $\L$.  Similarly, any descent
in the $\M$-entries after the rightmost $\L$ would create a $[321]$-pattern when
coupled with the leftmost $\R$.  Therefore, the $\M$-entries are allowed to have at
most one descent, which must occur between the $\M$-entries on either side of
the intertwining zone.  

From this, we know that the structure of $w^0 w$ is as follows.  Some number
$\lambda+1$ of $\L$-entries are intertwining with some number $\rho+1$ of
$\R$-entries in the intertwining zone.  The remaining $\L$-entries are
intertwining with some number $\mu$ of $\M$-entries on the left side of the
intertwining zone, and the remaining $\R$-entries are intertwining with $(M-\mu)$
$\M$-entries on the right side of the intertwining zone.
This structure is illustrated in Figure~\ref{fig:Intertwined}.

\begin{figure}
$[\overbrace{\L\M\cdots\L\M}^\textup{$\L$'s and $\M$'s}\overbrace{\underline{\R}\L\cdots\R\underline{\L}}^\textup{intertwining zone}\overbrace{\M\R\cdots\M\R}^\textup{$\R$'s and $\M$'s}]$
\caption{The structure of the base window of an intertwined short fully commutative element.  The leftmost $\R$ and the rightmost $\L$ are underlined.}
\label{fig:Intertwined}
\end{figure}

The contribution to the Coxeter length generating function from splitting the
$M$-entries into two sets of size $\mu$ and $M-\mu$, and transposing as
necessary in order to place the first set on the left and the right set on the
right is $\qbinom{M}{\mu}$ by Interpretation~\ref{interp:intertwining}.

Once these entries have been ordered in the minimal length configuration that
conforms to the structure shown in Figure~\ref{fig:Intertwined}, we compute the
Coxeter length offset $Q$ by counting the remaining inversions among the
entries in the base window.  We have $\mu$ $\M$-entries inverted with
$(\lambda+1)$ $\L$-entries, and $(\rho+1)$ $\R$-entries inverted with $(M-\mu)$
$\M$-entries.  In addition, the leftmost $\R$ is inverted with $\lambda$
$\L$-entries not including the rightmost $\L$, and the rightmost $\L$ is
inverted with $\rho$ $\R$-entries not including the leftmost $\R$.  Finally,
the leftmost $\R$ is inverted with the rightmost $\L$.  These inversions
contribute $Q = \mu (\lambda+1) + (M-\mu)(\rho+1) + \lambda + \rho + 1$ to the
Coxeter length.

Lastly, we can intertwine the ($L-\lambda-1$) $\L$-entries and $\mu$
$\M$-entries to the left of the zone, the $\lambda$ $\L$-entries and $\rho$
$\R$-entries in the zone, and the ($M-\mu$) $\M$-entries with the ($R-\rho-1$)
$\R$-entries to the right of the zone.  This proves the formula.
\end{proof}


\subsubsection{Short elements without intertwining}

If the $\L$-entries and $\R$-entries are not intertwined, there may be
$\M$-entries lying between the rightmost $\L$ and the leftmost $\R$.  There can
be no descents in the $\M$-entries to the left of the rightmost $\L$ nor to the
right of the leftmost $\R$ by the same reasoning as above.  However, multiple
descents may now occur among the $\M$-entries.  This structure is illustrated
in Figure \ref{fig:NonIntertwined}.  We enumerate these short elements without
intertwining by conditioning on the number of descents that occur among the
$\M$-entries.  Lemmas~\ref{l:S_0}, \ref{l:S_1}, and \ref{l:S_2} enumerate the short elements in which there are zero, one, or two or more descents among the $\M$-entries, respectively.

\begin{figure}
$[\overbrace{\L\M\cdots\M\underline{\L}}^\textup{$\L$'s and $\M$'s}\overbrace{\M\M\cdots\M\M}^\textup{$\M$ descents may occur}\overbrace{\underline{\R}\M\cdots\M\R}^\textup{$\R$'s and $\M$'s}]$
\caption{The structure of the base window of a non-intertwined short fully commutative element. The rightmost $\L$ and the leftmost $\R$ are underlined.}
\label{fig:NonIntertwined}
\end{figure}

\begin{lemma}
Fix $L$ and $R > 0$.  Then, we have
\[ S_0(x,q) = \sum_w x^{n(w)} q^{\l(w)} = \sum_{M\geq 0} x^{L+M+R} \sum_{\mu=0}^{M}q^\mu\qbinom{L-1+\mu}{\mu}\qbinom{R+M-\mu}{M-\mu}, \]
where the sum on the left is over all $w \in S_{\infty}$ that are not intertwining, have no descents among the $\M$-entries, and apply to a short $(L)(M)(R)$ abacus for some $M$.
\label{l:S_0}
\end{lemma}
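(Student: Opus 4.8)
The plan is to describe explicitly which base windows arise for the elements counted by $S_0$, and then to read off the generating function by counting inversions via Interpretation~\ref{interp:intertwining}. First I would pin down the structure. By Definition~\ref{d:lmr} and the discussion around Figure~\ref{fig:NonIntertwined}, in the sorted base window of $w^0$ the $\L$-, $\M$-, and $\R$-entries occupy three consecutive blocks, and every $\L$-value is smaller than every $\M$-value, which in turn is smaller than every $\R$-value. I claim that, for a finite permutation $w$, the affine element $w^0 w$ is fully commutative, non-intertwining, and free of descents among its $\M$-entries if and only if the base window of $w^0 w$ is a shuffle of the increasing $\L$-entries, the increasing $\M$-entries, and the increasing $\R$-entries in which no $\R$-entry precedes an $\L$-entry; and each such shuffle is realized by a unique $w$. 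The forward implication is immediate from the hypotheses. For the converse one invokes Theorem~\ref{t:321-av}: since the value ranges of the three types are nested and no $\R$ precedes an $\L$, every inversion inside the base window is either an $\M$ before an $\L$ or an $\R$ before an $\M$, and two such inversions cannot be concatenated into a decreasing triple (an $\M$-before-$\L$ inversion admits no continuation, and an $\R$-before-$\M$ inversion followed by an $\M$-before-$\L$ one would put an $\R$ before an $\L$); moreover, just as in the proof of Proposition~\ref{p:long_short}, every $\M$- or $\R$-value exceeds the largest entry of the preceding window and is smaller than the smallest entry of the window two steps later, so no decreasing triple can span two or more distinct windows. Checking this characterization is the one place where I expect real care to be needed.

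Next I would count inversions. Let $p$ be the position of the rightmost $\L$-entry in the base window of $w^0 w$ (note $L\ge 1$, as $1$ is always an $\L$-entry). The non-intertwining condition says precisely that positions $1,\dots,p$ hold all $L$ $\L$-entries together with some number $\mu$ of $\M$-entries and no $\R$-entries, while positions $p+1,\dots,n$ hold the remaining $M-\mu$ $\M$-entries together with all $R$ $\R$-entries. Because the blocks are internally increasing and the value ranges are nested, no inversion joins a position $\le p$ to a position $>p$, so $\l(w)$ is the number of inversions among the first $p$ positions plus the number among the last $n-p$. In the left part the $\L$-entry pinned at position $p$ forms an inversion with each of the $\mu$ $\M$-entries to its left, and the remaining unrestricted shuffle of $L-1$ $\L$-entries with $\mu$ $\M$-entries has inversion generating function $\qbinom{L-1+\mu}{\mu}$ by Interpretation~\ref{interp:intertwining}; so the left part contributes $q^{\mu}\qbinom{L-1+\mu}{\mu}$. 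The right part is an unrestricted shuffle of $M-\mu$ increasing $\M$-entries with $R$ increasing $\R$-entries, contributing $\qbinom{R+M-\mu}{M-\mu}$, again by Interpretation~\ref{interp:intertwining}.

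Finally I would assemble: multiplying these contributions, summing over $\mu=0,\dots,M$, weighting by $x^{L+M+R}$ for the rank, and summing over $M\ge 0$ yields exactly the stated formula for $S_0(x,q)$. (If one instead further splits the right part into the $\M$-entries that interleave with the $\R$-entries and the $\M$-entries lying strictly between the rightmost $\L$-entry and the leftmost $\R$-entry, the same answer emerges after applying the identity $\sum_{\nu=0}^{N} q^{\nu}\qbinom{R-1+\nu}{\nu}=\qbinom{R+N}{N}$, which follows by induction from $\qbinom{m}{k}=\qbinom{m-1}{k-1}+q^{k}\qbinom{m-1}{k}$.)
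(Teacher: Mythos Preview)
Your proposal is correct and follows essentially the same approach as the paper: both condition on the number $\mu$ of $\M$-entries lying to the left of the rightmost $\L$, obtain the offset $q^{\mu}$ from the inversions between those $\mu$ $\M$-entries and the pinned rightmost $\L$, and then use Interpretation~\ref{interp:intertwining} twice for the two shuffles. Your write-up is simply more explicit about the structural characterization and the verification via Theorem~\ref{t:321-av} that the paper leaves to the surrounding discussion.
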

\begin{proof}
Let $\mu$ be the number of $\M$-entries lying to the left of the rightmost $\L$.
Then, the $\mu$ $\M$-entries can be intertwined with the remaining $(L-1)$
$\L$-entries, and the remaining $(M-\mu)$ $\M$-entries can be intertwined with
the $R$ $\R$-entries.  

We compute the Coxeter length offset by counting the inversions among the entries in the base window in the minimal length configuration of this type.  In this case, there are simply $\mu$ $\M$-entries that are inverted with the rightmost $\L$.  Summing over all valid values of $\mu$ gives the formula.
\end{proof}

\begin{lemma}
Fix $L$ and $R > 0$.  Then, we have
\[ S_1(x,q)= \sum_w x^{n(w)} q^{\l(w)} = \sum_{M\geq 0} x^{L+M+R}\sum_{\mu=1}^{M-1}\bigg(\qbinom{M}{\mu}-1\bigg)\qbinom{L+\mu}{\mu}\qbinom{R+M-\mu}{M-\mu}, \]
where the sum on the left is over all $w \in S_{\infty}$ that are not
intertwining, have exactly one descent among the $\M$-entries, and apply to a
short $(L)(M)(R)$ abacus for some $M$.
\label{l:S_1}
\end{lemma}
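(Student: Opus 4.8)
The plan is to pin down the exact shape of the base window of $w^0 w$ under the hypotheses of the lemma, and then to read off the three Gaussian-polynomial factors from that shape via Interpretation~\ref{interp:intertwining}, just as in the proof of Lemma~\ref{l:S_0}.

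First I would fix $w^0$ with base window $[\L\cdots\L\M\cdots\M\R\cdots\R]$ having $L$, $M$, and $R$ entries of the three statuses, so that any admissible $w$ produces an element of $\tld{S}_n$ with $n = L+M+R$, which accounts for the factor $x^{L+M+R}$. Recall from the discussion preceding Lemma~\ref{l:S_0} that in a fully commutative, non-intertwining element $w^0 w$ the $\L$-entries appear in increasing order, the $\R$-entries appear in increasing order, every $\L$-entry precedes every $\R$-entry, and no two $\M$-entries lying both to the left of the rightmost $\L$ (or both to the right of the leftmost $\R$) can form a descent. Since the base window of $w^0$ is sorted, $\ell(w)$ equals the number of inversions in the base window of $w^0 w$; in view of the previous sentence and the fact that $\L < \M < \R$ in value, the only inversions are among the $\M$-entries, between an $\L$-entry and an $\M$-entry (with the $\M$-entry on the left), or between an $\R$-entry and an $\M$-entry (with the $\R$-entry on the left).

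Next I would extract the structure forced by the single $\M$-descent. Let $\mu$ be the number of $\M$-entries preceding the unique descent, so that $1 \le \mu \le M-1$; call these the first-run $\M$-entries and the remaining $M-\mu$ the second-run $\M$-entries, and note that each run is increasing. If some $\L$-entry lay to the right of the first second-run $\M$-entry, then together with the two $\M$-entries forming the descent it would be a $[321]$-instance; likewise an $\R$-entry lying to the left of the last first-run $\M$-entry would produce a $[321]$-instance. Hence, by Theorem~\ref{t:321-av}, every $\L$-entry precedes every second-run $\M$-entry and every $\R$-entry follows every first-run $\M$-entry. Combined with the monotonicity statements above, this forces the base window to be a shuffle of the increasing sequence of $L$ $\L$-entries with the increasing sequence of $\mu$ first-run $\M$-entries, concatenated with a shuffle of the increasing sequence of $M-\mu$ second-run $\M$-entries with the increasing sequence of $R$ $\R$-entries, the unique $\M$-descent occurring at the junction of the two runs --- precisely the shape of Figure~\ref{fig:NonIntertwined}. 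Conversely, for any $1 \le \mu \le M-1$, any splitting of the $M$ sorted $\M$-values into a first run and a second run for which a descent actually occurs, and any two shuffles as above, the resulting window is fully commutative, non-intertwining, and has exactly one $\M$-descent: the only conceivable $[321]$-instances are precisely the two ruled out by ``no $\L$-entry after a second-run $\M$-entry'' and ``no $\R$-entry before a first-run $\M$-entry.''

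It then remains to count. For fixed $M$ and $\mu$, choosing which $\mu$ of the $M$ sorted $\M$-values form the first run and recording the number of inversions among the $\M$-entries thereby created contributes $\qbinom{M}{\mu}$ by Interpretation~\ref{interp:intertwining}; subtracting $1$ deletes the unique inversion-free choice, which is exactly the one producing no descent, so the admissible choices contribute $\qbinom{M}{\mu}-1$. The shuffle of the $\mu$ first-run $\M$-entries with the $L$ $\L$-entries contributes $\qbinom{L+\mu}{\mu}$, its inversion statistic counting exactly the inversions between $\L$- and $\M$-entries, and the shuffle of the $M-\mu$ second-run $\M$-entries with the $R$ $\R$-entries contributes $\qbinom{R+M-\mu}{M-\mu}$, counting the inversions between $\R$- and $\M$-entries. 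As $\ell(w)$ is the sum of these three disjoint inversion counts, $q^{\ell(w)}$ factors as the product of the three polynomials; summing over $1 \le \mu \le M-1$ (the range is empty for $M \le 1$) and over $M \ge 0$ yields the stated formula for $S_1(x,q)$. The step I expect to demand the most care is the structural characterization above --- verifying both that the hypotheses force the two-shuffle shape (no $\L$ can slip past a second-run $\M$, no $\R$ past a first-run $\M$, and the descent can occur nowhere but at the junction) and that, conversely, every window of that shape is genuinely fully commutative with exactly one $\M$-descent; once that is settled, the enumeration is a direct application of Interpretation~\ref{interp:intertwining} exactly as in Lemmas~\ref{l:S_0} and~\ref{l:S_I}.
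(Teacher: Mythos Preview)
Your proposal is correct and follows essentially the same approach as the paper: condition on the position $\mu$ of the unique $\M$-descent, use Interpretation~\ref{interp:intertwining} to get $\qbinom{M}{\mu}-1$ for the non-identity arrangements of the $\M$-values into two increasing runs, and then shuffle the $\L$-entries with the first run and the $\R$-entries with the second run to obtain the remaining two Gaussian factors. Your write-up is in fact considerably more careful than the paper's terse version, since you explicitly argue (via Theorem~\ref{t:321-av}) that no $\L$-entry can lie past a second-run $\M$ and no $\R$-entry can lie before a first-run $\M$, and you justify why there is no additional $q$-offset (in contrast to the $q^\mu$ in Lemma~\ref{l:S_0}).
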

\begin{proof}
Consider such permutations having a descent at the $\mu$th $\M$-entry.  The
choices for the $\M$-entries that are not the identity permutation are
enumerated by $\qbinom{M}{\mu} - 1$ by
Interpretation~\ref{interp:intertwining}.  Then, the $\M$-entries to the left of
the descent can be intertwined with the $\L$-entries, and the $\M$-entries to
the right of the descent can be intertwined with the $\R$-entries.  Summing
over all valid values of $\mu$ gives the formula.
\end{proof}

To prepare for the proof of our next result, we recall the following lemma which
solves certain generating function recurrences.

\begin{lemma}{\cite[Lemma 2.3]{MBMcolumnconvex}}\label{l:mbm_2.3}
Let $\mathcal{A}$ be the sub-algebra of the formal power series algebra
$\mathbb{R}[[s,t,x,y,q]]$ formed with series $S$ such that $S(1,t,x,y,q)$ and
$S'(1,t,x,y,q)$ are well-defined in $\mathbb{R}[[t,x,y,q]]$.
Moreover, we abbreviate $f(s,t,x,y,q) \in \mathcal{A}$ by $f(s)$.
Let $X(s,t,x,y,q)$ be a formal power series in $\mathcal{A}$.
Suppose that 
\[ X(s) = x e(s)+xf(s) X(1)+xg(s)X(sq) \]
where $e$, $f$, and $g$ are in $\mathcal{A}$.  Then, $X(s,t,x,y,q)$ is equal to
$$\frac{E(s) + E(1)F(s) - E(s)F(1)}{1-F(1)}$$
where
\[ E(s) = \sum_{n \geq 0} x^{n+1} g(s) g(sq) \cdots g(sq^{n-1}) e(sq^n) \text{ and } F(s) = \sum_{n \geq 0} x^{n+1} g(s) g(sq) \cdots g(sq^{n-1}) f(sq^n). \]
\end{lemma}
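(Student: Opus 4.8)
The plan is to solve the functional equation $X(s) = xe(s) + xf(s)X(1) + xg(s)X(sq)$ by iterating on the last term, which replaces $s$ by $sq$ at each step. First I would note that, treating $X(1)$ temporarily as an unknown constant in the relevant power series ring, the recurrence expresses $X(s)$ in terms of $X(sq)$; substituting the same identity for $X(sq)$ in terms of $X(sq^2)$, and so on, and checking that the iteration converges in the $(s,t,x,y,q)$-adic topology on $\mathcal{A}$ (each substitution picks up another factor of $x$, so the tail $x^{n}g(s)g(sq)\cdots g(sq^{n-1})X(sq^n)$ vanishes in the limit), I obtain the closed form
\[ X(s) = \sum_{n\geq 0} x^{n+1} g(s)g(sq)\cdots g(sq^{n-1})\, e(sq^n) \;+\; \left(\sum_{n\geq 0} x^{n+1} g(s)g(sq)\cdots g(sq^{n-1})\, f(sq^n)\right) X(1), \]
that is, $X(s) = E(s) + F(s)X(1)$ in the notation of the statement. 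The hypothesis that $S(1,t,x,y,q)$ and $S'(1,t,x,y,q)$ are well-defined for $S\in\mathcal{A}$ is exactly what guarantees that $E(s)$, $F(s)$, and their specializations at $s=1$ all make sense; this is the one place where membership in the subalgebra $\mathcal{A}$ (rather than all of $\mathbb{R}[[s,t,x,y,q]]$) is used, and I would spell out that $e,f,g\in\mathcal{A}$ together with closure of $\mathcal{A}$ under the operations involved keeps every series appearing in the argument inside $\mathcal{A}$.

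Second, I would pin down the still-unknown quantity $X(1)$ by setting $s=1$ in the closed form just derived: $X(1) = E(1) + F(1)X(1)$, hence $(1 - F(1))X(1) = E(1)$. Since $F(1)$ is a power series with no constant term (it is divisible by $x$), $1 - F(1)$ is invertible in $\mathbb{R}[[t,x,y,q]]$, so $X(1) = E(1)/(1-F(1))$. Substituting this back into $X(s) = E(s) + F(s)X(1)$ gives
\[ X(s) = E(s) + F(s)\,\frac{E(1)}{1-F(1)} = \frac{E(s)(1-F(1)) + E(1)F(s)}{1-F(1)} = \frac{E(s) + E(1)F(s) - E(s)F(1)}{1-F(1)}, \]
which is precisely the claimed formula. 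Finally I would verify that this $X(s)$ indeed satisfies the original recurrence — a direct substitution using $E(s) = xe(s) + xg(s)E(sq)$ and $F(s) = xf(s) + xg(s)F(sq)$ (both immediate from the definitions of $E$ and $F$) — so that the solution is not merely a formal consequence of the manipulations but genuinely solves the equation, and is the unique such solution in $\mathcal{A}$.

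The main obstacle is not any single computation but the bookkeeping of convergence and well-definedness: one must be careful that the infinite sums defining $E$ and $F$ converge in $\mathcal{A}$, that specialization at $s=1$ commutes with these sums, and that the operator $S(s)\mapsto S(sq)$ preserves $\mathcal{A}$ and is continuous for the adic topology, so that the termwise iteration is legitimate. Once those formal points are in place, the algebra is routine. I would therefore devote most of the write-up to the iteration-and-convergence step and treat the solving for $X(1)$ and the back-substitution as short calculations.
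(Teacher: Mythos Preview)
Your argument is correct and is the standard iteration method: unwind the recurrence in $s\mapsto sq$, observe that each step contributes an extra factor of $x$ so the tail vanishes, obtain $X(s)=E(s)+F(s)X(1)$, specialize to $s=1$ to solve for $X(1)$, and substitute back. The algebra and the convergence bookkeeping are exactly as you describe.

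Note, however, that the paper does not give its own proof of this lemma: it is quoted verbatim as \cite[Lemma~2.3]{MBMcolumnconvex} and used as a black box in the proof of Lemma~\ref{l:S_2}. So there is no ``paper's proof'' to compare against here. Your proposal is essentially the proof that appears in Bousquet-M\'elou's original paper, and nothing more is needed.
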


We are now in a position to enumerate the remaining elements.

\begin{lemma}
Fix $L$ and $R > 0$.  Then, we have
\[ S_2(x,q) = \sum_w x^{n(w)} q^{\l(w)} = x^{L+R}\sum_{i,j\geq 1} \qbinom{L+i}{L}\qbinom{R+j}{R}d_{i,j}(x,q), \]
where the sum on the left is over all $w \in S_{\infty}$ that are not
intertwining, have at least two descents among the $\M$-entries, and apply to a
short $(L)(M)(R)$ abacus for some $M$.  Here, $d_{i,j}(x,q)$ is the coefficient of $z^is^j$
in the generating function that satisfies the functional equation
\[ D(x,q,z,s)=\frac{\sum_{n\geq0}x^{n+1}\sum_{i=1}^{n-1}\big(\qbinom{n}{i}-1\big)z^i\big((qs)-(qs)^{n-i}\big)}{(1-qs)(1-xs)}+\frac{xqs\big(D(x,q,z,1)-D(x,q,z,qs)\big)}{(1-qs)(1-xs)} \]
and whose solution is given explicitly below.
\label{l:S_2}
\end{lemma}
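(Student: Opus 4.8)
The plan is to set up a generating function recurrence for the non-intertwining short fully commutative elements with at least two $\M$-descents, indexed by the parameters that control the structure in Figure~\ref{fig:NonIntertwined}, and then invoke Lemma~\ref{l:mbm_2.3} to solve it explicitly. First I would fix $L$ and $R>0$ and describe the base window as a concatenation of three blocks: a left block of $\L$-entries interleaved with $\mu$ $\M$-entries (no descents), a middle block of $M'$ $\M$-entries carrying all the descents (at least two of them), and a right block of $(M-\mu-M')$ $\M$-entries interleaved with the $\R$-entries (no descents). The contributions of the left and right interleavings are $\qbinom{L+i}{L}$-type and $\qbinom{R+j}{R}$-type $q$-binomials by Interpretation~\ref{interp:intertwining}, which is why the claimed formula factors as $\qbinom{L+i}{L}\qbinom{R+j}{R}$ times a ``core'' series $d_{i,j}(x,q)$ that depends only on how the middle $\M$-block with $\geq 2$ descents is built; here $i$ should count the $\M$-entries pushed into the left interleaving region and $j$ those pushed into the right.

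The heart of the argument is to derive the functional equation for $D(x,q,z,s)=\sum_{i,j} d_{i,j}(x,q)\, z^i s^j$. I would build the middle $\M$-block descent by descent from right to left (or by a peeling argument on the rightmost ``run'' of the $\M$-permutation), tracking: the length of the block via $x$, the number of inversions via $q$, the number of $\M$-entries exported to the left via $z$, and the number exported to the right via $s$. A single-descent block of size $n$ with a descent at the $i$th position contributes $\big(\qbinom{n}{i}-1\big)$ choices by Lemma~\ref{l:S_1}'s reasoning, and gluing a new $\M$-run of size $n-i$ onto the right of an existing block both shifts the exported-right count (the $qs$ versus $(qs)^{n-i}$ discrepancy in the numerator records the inversions created against the already-placed entries) and multiplies the $s$-weight. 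The geometric factors $\dfrac{1}{(1-qs)(1-xs)}$ arise from summing over the size of the newly attached run and over how far the attachment point slides; the recursive term $\dfrac{xqs\big(D(x,q,z,1)-D(x,q,z,qs)\big)}{(1-qs)(1-xs)}$ comes from attaching to a block that itself already has $\geq 1$ descent, with the $s\mapsto qs$ substitution encoding the inversions the attached entries incur against the rest of the block, and the $D(\cdots,1)$ term handling the boundary correction exactly as in the $e,f,g$ format of Lemma~\ref{l:mbm_2.3}. Matching this to $X(s)=x e(s)+x f(s)X(1)+x g(s) X(sq)$ with $g(s)=\dfrac{qs}{(1-qs)(1-xs)}$, an appropriate $f$, and $e(s)$ the explicit numerator sum, then lets me read off the closed form $\dfrac{E(s)+E(1)F(s)-E(s)F(1)}{1-F(1)}$, which is ``the solution given explicitly below.''

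The main obstacle I anticipate is bookkeeping the $q$-weights correctly when a new $\M$-run is appended and some of its entries are subsequently counted as ``exported'' to the left or right interleaving regions: one must be careful that inversions between an exported $\M$-entry and the $\L$- or $\R$-entries are already absorbed into the $\qbinom{L+i}{L}$ and $\qbinom{R+j}{R}$ factors, while inversions between exported $\M$-entries and the still-central $\M$-entries belong to $d_{i,j}$ — getting this partition of inversions right is exactly what forces the precise form of the numerator (the $(qs)-(qs)^{n-i}$ expression) and the $qs$ in the recursive term. A secondary subtlety is ensuring the ``at least two descents'' condition is enforced rather than ``at least one''; I would handle this by noting that the single-descent seed and the recursive append each add a descent, so the combination automatically produces blocks with $\geq 2$ descents, with no double-counting against $S_1$ or $S_0$. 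Once the functional equation is in the form required by Lemma~\ref{l:mbm_2.3}, the explicit solution and hence the stated formula for $S_2(x,q)$ follow immediately.
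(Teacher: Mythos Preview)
Your high–level plan is right and matches the paper: factor off the $\L$- and $\R$-interleavings as $\qbinom{L+i}{L}\qbinom{R+j}{R}$, where $i$ and $j$ are the numbers of $\M$-entries to the left of the first $\M$-descent and to the right of the last $\M$-descent respectively, and reduce to a generating function $D(x,q,z,s)$ over $321$-avoiding permutations of the $\M$-block with at least two descents, tracked by size, length, $i$, and $j$. The final step of invoking Lemma~\ref{l:mbm_2.3} is also the paper's move.

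Where you diverge from the paper, and where the proposal has a genuine gap, is the mechanism for the functional equation. The paper does \emph{not} build the $\M$-block ``descent by descent'' or by gluing runs. It uses the generating-tree construction of West and Barcucci et al.: every $321$-avoiding permutation $v$ with $\ge 2$ descents arises by inserting its single largest entry $n{+}1$ into a $321$-avoiding permutation $w$ of size $n$, necessarily at a position weakly right of the rightmost descent of $w$. If $w$ has exactly one descent this insertion is what produces the explicit seed $N(x,q,z,s)$; the $(qs)-(qs)^{n-i}$ in the numerator is exactly $\sum_{k=1}^{n-i-1}(qs)^k$, the sum over the $k$ entries placed to the right of $n{+}1$. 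If $w$ already has $\ge 2$ descents one gets the recursive term; summing over the $j(w)$ possible insertion positions (each adding $k$ inversions and resetting $j$ to $k$) plus the append-at-the-end case gives $\frac{xqs}{1-qs}\big(D(\cdot,1)-D(\cdot,qs)\big)+xsD(\cdot,s)$, and solving for $D(\cdot,s)$ produces the $(1-xs)$ in the denominator.

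Your ``append a run'' picture does not reproduce this. The $(1-xs)^{-1}$ is not a geometric sum over run sizes; it is an algebraic artifact of moving the $xsD(s)$ term to the left-hand side. And your statement that the recursive term comes from ``attaching to a block that itself already has $\geq 1$ descent'' is inconsistent with the equation you are trying to derive: the recursive term involves $D$, which counts permutations with $\geq 2$ descents, not $\geq 1$. If you attached to one-descent blocks you would be double-counting against $N$. The fix is simply to switch to the insert-the-maximum generating-tree argument; once you do, the functional equation and the application of Lemma~\ref{l:mbm_2.3} go through exactly as stated.
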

\begin{proof}
In this proof, we use the ideas of Barcucci et al. \cite{barcucci},
Bousquet-M\'elou \cite{MBMcolumnconvex}, and West \cite{West}, to investigate
the structure of the permutations restricted to the $\M$'s in the
base window.  

For such a finite permutation $w\in S_M$, we consider the following statistics:  
\begin{itemize}
    \item $n(w)$ is the size of the element (represented by variable $x$), 
    \item $\l(w)$ is the number of inversions (represented by variable $q$), 
    \item $i(w)$ is the number of entries to the left of the leftmost descent (represented by variable $z$), and 
    \item $j(w)$ is the number of entries to the right of the rightmost descent (represented by variable $s$).  
\end{itemize}
Let 
\[ D(x,q,z,s) = \sum_{w} x^{n(w)} q^{\l(w)} z^{i(w)} s^{j(w)} \]
where we sum over all fully commutative permutations with at least two descents.

We require the auxiliary function $N(x,q,z,s) = \sum_{w} x^{n(w)} q^{\l(w)}
z^{i(w)} s^{j(w)}$ where we sum over all fully commutative permutations $w\in S_n$
with at least two descents such that removing the largest entry from the one-line
notation of $w$ results in a permutation that has only one descent.  Then, the
permutations counted by $N(x,q,z,s)$ are generated from fully commutative
permutations $w'$ with exactly one descent by inserting the entry $n(w')+1$ into
the one-line notation of $w'$ at some position to the right of the existing
descent in order to avoid creating a $[321]$-instance, and this creates the
second descent.  If we fix the existing descent to occur at entry $i$, then the
fully commutative permutations with exactly one descent contribute
$\qbinom{n}{i}-1$ to $N(x,q,z,s)$.  Let $k$ denote the number of entries of $w$
to the right of the position where we insert entry $n+1$.  As $k$ runs from 1 to
$n-i-1$, we have that $n$ increases by $1$, the Coxeter length $l$ increases by
$k$, there are $i$ entries to the left of the leftmost descent, and $k$ entries
to the right of the rightmost descent.  Therefore,
\[
N(x,q,z,s)=\sum_{n\geq0}\sum_{i=1}^{n-1}x^{n+1}\bigg(\qbinom{n}{i}-1\bigg)z^i
\sum_{k=1}^{n-i-1} q^{k} s^{k}
=\sum_{n\geq0}\sum_{i=1}^{n-1}x^{n+1}\bigg(\qbinom{n}{i}-1\bigg)z^i\frac{(qs)-(qs)^{n-i}}{1-qs}.
\]
We remark that $x$ divides $N(x,q,z,s)$.

Next, we have
\[ D(x,q,z,s)=N(x,q,z,s)+\sum_w \bigg(\sum_{k=1}^{j(w)}
\big(x^{n(w)+1}q^{\l(w)+k}z^{i(w)} s^{k}\big)+x^{n(w)+1} q^{\l(w)} z^{i(w)} s^{j(w)+1}\bigg) \]
where the leftmost sum is over all fully commutative permutations with at least
two descents.  This sum counts such permutations that are obtained by
inserting entry $n(w)+1$ into the one-line notation of an existing fully
commutative permutation $w$ having at least two descents, and such that
$n(w)+1$ is inserted into a position to the right of the rightmost descent.
The rightmost term corresponds to inserting into the rightmost position in the
one-line notation, while the sum from $k = 1$ to $j(w)$ corresponds to inserting
into the remaining positions in the one-line notation from right to left.  This
formula expresses a recursive construction of the permutations we are counting,
known as the generating tree.

Hence,
\[ D(x,q,z,s)=N(x,q,z,s)+\frac{xqs}{1-qs}\big(D(x,q,z,1)-D(x,q,z,qs)\big)+xsD(x,q,z,s), \]
and therefore,
\[ D(x,q,z,s)=\frac{N(x,q,z,s)}{1-xs}+\frac{xqs}{(1-qs)(1-xs)}D(x,q,z,1)+\frac{-xqs}{(1-qs)(1-xs)}D(x,q,z,qs). \] 
This functional equation has exactly the same form as those discussed in \cite{MBMcolumnconvex}; applying Lemma~\ref{l:mbm_2.3} proves that 
\[ D(x,q,z,s)=\frac{E(x,q,z,s)+E(x,q,z,1)F(x,q,z,s)-E(x,q,z,s)F(x,q,z,1)}{1-F(x,q,z,1)}, \]
where
\[ E(x,q,z,s)=\sum_{n\geq 0}x^{n+1}\frac{-qs}{(1-qs)(1-xs)}\cdots\frac{-q^ns}{(1-q^ns)(1-xq^{n-1}s)}\frac{N(x,q,z,sq^n)/x}{1-xq^ns} \]
and
\[ F(x,q,z,s)=\sum_{n\geq 0}x^{n+1}\frac{-qs}{(1-qs)(1-xs)}\cdots\frac{-q^ns}{(1-q^ns)(1-xq^{n-1}s)}\frac{q^{n+1}s}{(1-q^{n+1}s)(1-xq^ns)}.  \]
Condensing these formulas,
\[ E(x,q,z,s)=\sum_{n\geq 0}\frac{(-1)^n(sx)^nq^{\binom{n+1}{2}}}{(qs,q)_n(xs,q)_{n+1}}N(x,q,z,sq^n) \textup{ and } F(x,q,z,s)=\sum_{n\geq 0}\frac{(-1)^n(sx)^{n+1}q^{\binom{n+2}{2}}}{(qs,q)_{n+1}(xs,q)_{n+1}}. \]

The coefficient $d_{i,j}(x,q)$ of $z^i s^j$ in $D(x,q,z,s)$ enumerates the permutations applied to the $\M$'s in the base window of all sizes and lengths such that there are at least two descents and the leftmost descent is after $i$ entries and the rightmost descent is before $j$ entries.
Intertwining the $\L$-entries with the first $i$ $\M$-entries and intertwining the $\R$-entries with the last $j$ $\M$-entries gives the desired result.
\end{proof}


\subsection{Proof of the main theorem}\label{s:main_proof}

In this section, we complete the proof of our main result.

\begin{proof}[Proof of Theorem~\ref{t:all}]
Partition the set of fully commutative elements $\tld{w}$ into long elements and short elements.  The long elements in $\tld{S}_n$ are enumerated by Lemma~\ref{l:long}; we must sum over all $n$.  

Each short element $\tld{w}$ has a normalized abacus of type $(L)(M)(R)$ for some $L$, $M$, and $R$.  When this abacus is of type $(n)(0)(0)$ for some $n$, the base window for the corresponding minimal length coset representative is $[1\, 2\, \ldots\, n]$.  These elements $\tld{w}\in\tld{S}_n^{FC}$ are therefore in one-to-one correspondence with elements of $S_n^{FC}$.  Therefore, the generating function $C(x,q)$ enumerates these elements for all $n$. 

The elements that remain to be enumerated are short elements with normalized abacus of type $(L)(M)(R)$ for $R>0$.  We enumerate these elements by grouping these elements into families based on the values of $L$, $M$, and $R$.  Decompose each element $\tld{w}$ into the product of its minimal length coset representative $w^0$ and a finite permutation $w$.  Proposition~\ref{p:lmr} proves that for two minimal length coset representatives $w_1^0$ and $w_2^0$ of the same abacus type, the set of finite permutations $w$ that multiply to form a fully commutative element is the same.  Proposition~\ref{p:fixedLMR} proves that in an $(L)(M)(R)$-family of fully commutative elements, the contribution to the length from the minimal length coset representatives is $q^{L+R-1}\qbinom{L+R-2}{L-1}$.  What remains to be determined is the generating function for the contributions of the finite permutations $w$.

In an $(L)(M)(R)$-family of fully commutative elements, the finite permutations
$w$ might intermingle the $\L$ entries and the $\R$ entries of the base window
in which case there is at most one descent among the $\M$ entries at a
prescribed position; the contribution of such $w$ is given by $S_I$ in
Lemma~\ref{l:S_I}.  Otherwise, there is no intermingling and the finite
permutations $w$ may induce zero, one, or two or more descents among the $\M$
entries; these cases are enumerated by generating functions $S_0$, $S_1$, and
$S_2$ in Lemmas~\ref{l:S_0}, \ref{l:S_1}, and \ref{l:S_2}, respectively.   In
each of these lemmas, the values for $L$ and $R$ are held constant as $M$
varies.  Summing the product of the contributions of the minimal length coset
representatives and the finite permutations over all possible values of $L$ and
$R$ completes the enumeration.  
\end{proof}


\section{Numerical Conclusions}
\label{s:numerical}
Theorem~\ref{t:all} allows us to determine the length generating function
$f_n(q)$ for the fully commutative elements of $\tld{S}_n$ as $n$ varies.  The
first few series $f_n(q)$ are presented below.

\medskip
\begin{tabular}{l}
$f_3(q)=1 + 3 q + {\bf 6 q^2 + 6 q^3 + 6 q^4}+\cdots$ \\
$f_4(q)=1 + 4 q + 10 q^2 + {\bf 16 q^3 + 18 q^4 + 16 q^5 + 18q^6}+\cdots$ \\
$f_5(q)=1 + 5 q + 15 q^2 + 30 q^3 + 45 q^4 + {\bf 50 q^5 + 50 q^6 + 50 q^7 + 50 q^8 + 50 q^9 }+\cdots$ \\
$f_6(q)=1 + 6 q + 21 q^2 + 50 q^3 + 90 q^4 + 126 q^5 + 146 q^6 + {\bf 150 q^7 + 156 q^8 + 152 q^9 +}$ \\ 
$\qquad\qquad{\bf 156 q^{10} + 150 q^{11} + 158 q^{12} + 150 q^{13} + 156 q^{14} + 152 q^{15}}+\cdots$ \\
$f_7(q)=1 + 7 q + 28 q^2 + 77 q^3 + 161 q^4 + 266 q^5 + 364 q^6 + 427 q^7 + 462 q^8 + 483 q^9 + {\bf 490 q^{10} + }$ \\ 
$\qquad\qquad{\bf 490 q^{11} + 490 q^{12} + 490 q^{13} + 490 q^{14} + 490 q^{15}}+\cdots$ \\
$f_8(q)=1 + 8 q + 36 q^2 + 112 q^3 + 266 q^4 + 504 q^5 + 792 q^6 + 1064 q^7 + 1274 q^8 + 1416 q^9 + $\\
$\qquad\qquad 1520 q^{10} + 1568 q^{11} + 1602 q^{12} + {\bf 1600 q^{13} + 1616 q^{14} + 1600 q^{15} + 1618 q^{16} + }$ \\
$\qquad\qquad {\bf 1600 q^{17} + 1616 q^{18} + 1600 q^{19} + 1618 q^{20}}+\cdots$ \\
$f_9(q)=1 + 9 q + 45 q^2 + 156 q^3 + 414 q^4 + 882 q^5 + 1563 q^6 + 2367 q^7 + 3159 q^8 + 3831 q^9 + $ \\
$\qquad\qquad 4365 q^{10} + 4770 q^{11} + 5046 q^{12} + 5220 q^{13} + 5319 q^{14} + 5370 q^{15} + 5391 q^{16} + {\bf 5400 q^{17} +}$ \\
$\qquad\qquad{\bf 5406 q^{18} + 5400 q^{19} + 5400 q^{20} + 5406 q^{21} + 5400 q^{22} + 5400 q^{23}}+\cdots$ \\
$f_{10}(q)=1 + 10 q + 55 q^2 + 210 q^3 + 615 q^4 + 1452 q^5 + 2860 q^6 + 4820 q^7 + 7125 q^8 + 9470 q^9 +$ \\
$\qquad\qquad 11622 q^{10} + 13470 q^{11} + 15000 q^{12} + 16160 q^{13} + 17030 q^{14} + 17602 q^{15} + 18010 q^{16} +$ \\
$\qquad\qquad  18210 q^{17} + 18380 q^{18} + 18410 q^{19} + 18482 q^{20} + {\bf 18450 q^{21} + 18500 q^{22} + 18450 q^{23} + }$ \\
$\qquad\qquad {\bf 18500 q^{24} + 18452 q^{25} + 18500 q^{26} + 18450 q^{27} + 18500 q^{28} + 18450 q^{29} + }$ \\
$\qquad\qquad {\bf 18502 q^{30} + 18450 q^{31} + 18500 q^{32} + 18450 q^{33} + 18500 q^{34} + 18452 q^{35}}+\cdots$ \\
$f_{11}(q)=1 + 11 q + 66 q^2 + 275 q^3 + 880 q^4 + 2277 q^5 + 4928 q^6 + 9141 q^7 + 14850 q^8 + 21571 q^9 + $\\
$\qquad\qquad 28633 q^{10} + 35453 q^{11} + 41690 q^{12} + 47135 q^{13} + 51667 q^{14} + 55297 q^{15} + 58091 q^{16} + $ \\
$\qquad\qquad 60159 q^{17} + 61622 q^{18} + 62623 q^{19} + 63272 q^{20} + 63668 q^{21} + 63910 q^{22} + 64031 q^{23} + $ \\
$\qquad\qquad  64086 q^{24} + 64119 q^{25} + {\bf 64130 q^{26} + 64130 q^{27} + 64130 q^{28} + 64130 q^{29}}+\cdots$ \\
$f_{12}(q)=1 + 12 q + 78 q^2 + 352 q^3 + 1221 q^4 + 3432 q^5 + 8086 q^6 + 16356 q^7 + 28974 q^8 + $\\
$\qquad\qquad 45772 q^9 + 65670 q^{10} + 87120 q^{11} + 108690 q^{12} + 129288 q^{13} + 148170 q^{14} + 164776 q^{15} +$ \\
$\qquad\qquad  178980 q^{16} + 190680 q^{17} + 200148 q^{18} + 207444 q^{19} + 213084 q^{20} + 217096 q^{21} + $ \\
$\qquad\qquad  220098 q^{22} + 222012 q^{23} + 223458 q^{24} + 224172 q^{25} + 224814 q^{26} + 224992 q^{27} + $ \\
$\qquad\qquad  225276 q^{28} + 225216 q^{29} + 225408 q^{30} + {\bf 225264 q^{31} + 225420 q^{32} + 225280 q^{33} +} $ \\
$\qquad\qquad {\bf 225414 q^{34} + 225264 q^{35} + 225438 q^{36} + 225264 q^{37} + 225414 q^{38} + 225280 q^{39} +} $ \\
$\qquad\qquad {\bf  225420 q^{40} + 225264 q^{41} + 225432 q^{42} + 225264 q^{43} + 225420 q^{44} + 225280 q^{45} +} $ \\
$\qquad\qquad {\bf  225414 q^{46} + 225264 q^{47} + 225438 q^{48} + 225264 q^{49} + 225414 q^{50}}+\cdots$
\end{tabular}

\medskip
One remarkable quality of these series is their periodicity, given by the bold-faced terms.  This behavior is explained by the following corollary to Lemma~\ref{l:long}.

\begin{corollary}
The coefficients $a_i$ of $f_n(q) = \sum_{w \in \tld{S}_n^{FC}} q^{\l(w)} =
\sum_{i \geq 0} a_i q^i$ are periodic with period $m|n$ for sufficiently large
$i$.  When $n = p$ is prime, the period $m = 1$ and in this case there
are precisely
\[ {1 \over {p}} \Bigg( { {2p} \choose p} - 2 \Bigg) \]
fully commutative elements of length $i$ in $\widetilde{S}_p$, when $i$ is
sufficiently large.
\label{c:period}
\end{corollary}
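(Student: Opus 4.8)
The plan is to isolate the long fully commutative elements of $\tld S_n$, observe that the short ones contribute to only finitely many coefficients of $f_n(q)$, and then read the periodicity directly off the closed form in Lemma~\ref{l:long}.

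First I would show that, for each fixed $n$, the short fully commutative elements of $\tld S_n$ have Coxeter length bounded in terms of $n$ alone. By definition a short element has the last bead of its normalized abacus in a position $i\le 2n$, so Proposition~\ref{p:long_short} forces every lowest bead into a position at most $2n$; hence each $m_i$ in Proposition~\ref{p:length} satisfies $m_i\le 2n$ and $\l(w^0)\le 2n^2$. Since $\tld w=w^0w$ with $w^0$ the minimal length coset representative and $w\in S_n$ the parabolic factor, the standard length-additivity $\l(\tld w)=\l(w^0)+\l(w)$ for minimal coset representatives gives $\l(\tld w)\le 2n^2+\binom n2$. Therefore the contributions of $C(x,q)$ and of the final $(R,L)$-sum in Theorem~\ref{t:all} to $[x^n]G(x,q)$ are polynomials in $q$, so for all sufficiently large $i$,
\[ a_i=[q^i]\!\left(\frac{q^n}{1-q^n}\,P_n(q)\right),\qquad\text{where}\quad P_n(q):=\sum_{k=1}^{n-1}\qbinom nk^{\,2}. \]

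Writing $P_n(q)=\sum_{m\ge0}p_mq^m$ and $\frac{q^n}{1-q^n}=\sum_{j\ge1}q^{jn}$, for $i$ large the coefficient extraction collapses to $a_i=\sum_{m\ge0,\ n\mid m-i}p_m$, which depends only on $i\bmod n$; this is precisely periodicity with period dividing $n$. When $n=p$ is prime I would evaluate $P_p$ at a nontrivial $p$-th root of unity $\zeta$: from the identity $\qbinom pk\,(q)_k\,(q)_{p-k}=(q)_p$, the factor $1-q^p$ makes $(q)_p$ vanish at $\zeta$ while $(q)_k(\zeta)$ and $(q)_{p-k}(\zeta)$ are nonzero for $1\le k\le p-1$; hence $\qbinom pk(\zeta)=0$ and so $P_p(\zeta)=0$. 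The roots-of-unity filter then gives, for every residue $r$,
\[ \sum_{m\ge0,\ p\mid m-r}p_m=\frac1p\left(P_p(1)+\sum_{\zeta^p=1,\ \zeta\ne1}\zeta^{-r}P_p(\zeta)\right)=\frac{P_p(1)}{p}, \]
which is independent of $r$, i.e.\ the period is $1$. Finally $P_p(1)=\sum_{k=1}^{p-1}\binom pk^2=\binom{2p}p-2$ by the Vandermonde identity $\sum_{k=0}^p\binom pk^2=\binom{2p}p$, yielding the asserted count $\frac1p\bigl(\binom{2p}p-2\bigr)$.

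The one place that needs genuine care is the first step: confirming that ``short'' really caps $\l(\tld w)$, i.e.\ that abacus normalization together with Proposition~\ref{p:long_short} bounds every lowest bead position by $2n$, and invoking length-additivity for the parabolic factorization. Once that is in hand, the remainder is a routine coefficient extraction together with the elementary vanishing of the Gaussian polynomials $\qbinom pk$ at $p$-th roots of unity.
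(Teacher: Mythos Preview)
Your proposal is correct and follows essentially the same route as the paper: both arguments observe that the short elements contribute only a polynomial in $q$, that the long-element formula from Lemma~\ref{l:long} is a polynomial over $1-q^n$ (yielding period dividing $n$), and then, for prime $p$, that $\Phi_p(q)$ divides each $\qbinom{p}{k}$ so the generating function reduces to a polynomial over $1-q$, after which Vandermonde gives the constant value. The only stylistic difference is that the paper argues divisibility by $\Phi_p$ via irreducibility of $1+q+\cdots+q^{p-1}$, whereas you verify it by evaluating $\qbinom{p}{k}$ at a primitive $p$-th root of unity and applying the roots-of-unity filter; these are equivalent, and your version is arguably cleaner.
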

\begin{proof}
For a given $n$, the number of short fully commutative elements is finite.  The
formula for long elements in Lemma \ref{l:long} is a polynomial divided by
$1-q^n$.  Hence, the coefficients of this generating function satisfy $a_{i+n}
= a_i$, by a fundamental result on rational generating functions.

We have a factor of $(1-q^n)=(1-q)(1+q+\cdots+q^{n-1})$ in the numerator of
$\qbinom{n}{k}$ and when $n$ is prime, $(1+q+\cdots+q^{n-1})$ is irreducible.
Therefore $\qbinom{n}{k}$ contains a factor of $(1+q+\cdots+q^{n-1})$ for every
$k$ between $1$ and $n-1$.  Factoring one copy out of the sum in the expression
of Lemma \ref{l:long} and canceling with the same factor in the denominator of
$\frac{q^n}{1-q^n}$ leaves a denominator of $(1-q)$.  

Hence, we have that
\[ P(q) := { {q^n} \over {1 + q + \cdots + q^{n-1}} } \sum_{k = 1}^{n-1} \qbinom{n}{k}^{\,2} \]
is the polynomial numerator of the rational generating function $P(q) / (1-q)$
for the number of long fully commutative elements.  Therefore, when $i$ is
larger than the degree of $P(q)$, the coefficient of $q^i$ in the series
expansion of $P(q) / (1-q)$ is $P(1)$.  After substituting $q=1$ and applying Vandermonde's identity, 
\[ P(1)= \frac{1}{n}\sum_{k=1}^{n-1} \binom{n}{k}^2=\frac{1}{n}\Bigg(\sum_{k=0}^{n} \binom{n}{k}^2-2\Bigg)=\frac{1}{n}\Bigg(\binom{2n}{n}-2\Bigg),\]  as desired. 
\end{proof}

The distinction between long and short elements allows us to enumerate the
fully-commutative elements efficiently.  In some respects, this division is not
the most natural in that the periodicity of the above series begins before
there exist no more short elements.  Experimentally, it appears that the
periodicity begins at $1+\big\lfloor (n-1)/2\big\rfloor \big\lceil (n-1)/2
\big\rceil$; whereas, we can prove that the longest short element has length
$2\big\lfloor n/2\big\rfloor \big\lceil n/2 \big\rceil$.  

We begin by bounding the Coxeter length of finite fully commutative permutations.

\begin{definition}\label{d:bigr}
If $w$ has a unique left descent and $w$ has a unique right descent, then we say
that $w$ is \em bi-Grassmannian\em.
\end{definition}

\begin{figure}
\begin{center}
\begin{tabular}{l}
\xymatrix @=-4pt @! {
 \hs & \hs & \hs & \hf & \hs & \hs \\
 \hs & \hs & \hf & \hs & \hf & \hs \\
 \hs & \hf & \hs & \hf & \hs & \hf \\
 \hf & \hs & \hf & \hs & \hf & \hs \\
 \hs & \hf & \hs & \hf & \hs & \hs \\
 & \hs & \hf & \hs & \hs & \hs \\
 s_1 & s_2 & s_3 & s_4 & s_5 & s_6 \\
}
\end{tabular}
\end{center}
\caption{The heap diagram of a bi-Grassmannian permutation $w = s_3 s_2 s_4 s_1
s_3 s_5 s_2 s_4 s_6 s_3 s_5 s_4$}
\label{fig:bigr}
\end{figure}

\begin{figure}
\begin{center}
\begin{tabular}{c}
{ \tiny
\xymatrix @=-7pt @! {
 \hs & \hs & \hs & \hs & \hs & \hs \\
 \hs & \hs & \hs & \hs & \hs & \hs \\
 \hs & \hs & \hs & \hs & \hs & \hf \\
 \hf & \hs & \hs & \hs & \hf & \hs \\
 \hs & \hf & \hs & \hf & \hs & \hs \\
 & \hs & \hs & \hs & \hs & \hs \\
 s_1 & s_2 & s_3 & s_4 & s_5 & s_6 \\
}
\parbox{0.1in} { \vspace{0.5in} $\rightarrow$ }
\xymatrix @=-7pt @! {
 \hs & \hs & \hs & \hs & \hs & \hs \\
 \hs & \hs & \hs & \hs & \hs & \hs \\
 \hs & \hs & \hs & \hs & \hs & \hf \\
 \hf & \hs & \hf & \hs & \hf & \hs \\
 \hs & \hf & \hs & \hf & \hs & \hs \\
 & \hs & \hs & \hs & \hs & \hs \\
 s_1 & s_2 & s_3 & s_4 & s_5 & s_6 \\
}
\parbox{0.1in} { \vspace{0.5in} $\rightarrow$ }
\xymatrix @=-7pt @! {
 \hs & \hs & \hs & \hs & \hs & \hs \\
 \hs & \hs & \hs & \hs & \hs & \hs \\
 \hs & \hf & \hs & \hs & \hs & \hf \\
 \hf & \hs & \hf & \hs & \hf & \hs \\
 \hs & \hf & \hs & \hf & \hs & \hs \\
 & \hs & \hs & \hs & \hs & \hs \\
 s_1 & s_2 & s_3 & s_4 & s_5 & s_6 \\
}
\parbox{0.1in} { \vspace{0.5in} $\rightarrow$ }
\xymatrix @=-7pt @! {
 \hs & \hs & \hs & \hs & \hs & \hs \\
 \hs & \hs & \hs & \hs & \hs & \hs \\
 \hs & \hf & \hs & \hf & \hs & \hf \\
 \hf & \hs & \hf & \hs & \hf & \hs \\
 \hs & \hf & \hs & \hf & \hs & \hs \\
 & \hs & \hs & \hs & \hs & \hs \\
 s_1 & s_2 & s_3 & s_4 & s_5 & s_6 \\
}
\parbox{0.1in} { \vspace{0.5in} $\rightarrow$ }
\xymatrix @=-7pt @! {
 \hs & \hs & \hs & \hs & \hs & \hs \\
 \hs & \hs & \hs & \hs & \hf & \hs \\
 \hs & \hf & \hs & \hf & \hs & \hf \\
 \hf & \hs & \hf & \hs & \hf & \hs \\
 \hs & \hf & \hs & \hf & \hs & \hs \\
 & \hs & \hs & \hs & \hs & \hs \\
 s_1 & s_2 & s_3 & s_4 & s_5 & s_6 \\
}
} \\ 
{ \tiny
\parbox{0.1in} { \vspace{0.5in} $\rightarrow$ }
\xymatrix @=-7pt @! {
 \hs & \hs & \hs & \hs & \hs & \hs \\
 \hs & \hs & \hf & \hs & \hf & \hs \\
 \hs & \hf & \hs & \hf & \hs & \hf \\
 \hf & \hs & \hf & \hs & \hf & \hs \\
 \hs & \hf & \hs & \hf & \hs & \hs \\
 & \hs & \hs & \hs & \hs & \hs \\
 s_1 & s_2 & s_3 & s_4 & s_5 & s_6 \\
}
\parbox{0.1in} { \vspace{0.5in} $\rightarrow$ }
\xymatrix @=-7pt @! {
 \hs & \hs & \hs & \hf & \hs & \hs \\
 \hs & \hs & \hf & \hs & \hf & \hs \\
 \hs & \hf & \hs & \hf & \hs & \hf \\
 \hf & \hs & \hf & \hs & \hf & \hs \\
 \hs & \hf & \hs & \hf & \hs & \hs \\
 & \hs & \hs & \hs & \hs & \hs \\
 s_1 & s_2 & s_3 & s_4 & s_5 & s_6 \\
}
\parbox{0.1in} { \vspace{0.5in} $\rightarrow$ }
\xymatrix @=-7pt @! {
 \hs & \hs & \hs & \hf & \hs & \hs \\
 \hs & \hs & \hf & \hs & \hf & \hs \\
 \hs & \hf & \hs & \hf & \hs & \hf \\
 \hf & \hs & \hf & \hs & \hf & \hs \\
 \hs & \hf & \hs & \hf & \hs & \hs \\
 & \hs & \hf & \hs & \hs & \hs \\
 s_1 & s_2 & s_3 & s_4 & s_5 & s_6 \\
}
}
\end{tabular}
\end{center}
\caption{The construction of a bi-Grassmannian permutation containing $w = s_2 s_4 s_1
s_5 s_6$}
\label{fig:bires}
\end{figure}

\begin{lemma}\label{l:bigr}
Suppose $\mathsf{w}$ is a reduced expression for $w \in S_n^{FC}$.  Then there
exists a bi-Grassmannian permutation $x$ with reduced expression $\mathsf{x} =
\mathsf{u} \mathsf{w} \mathsf{v}$.  In particular, $\l(w) \leq \big\lfloor
n/2\big\rfloor \big\lceil n/2 \big\rceil$.
\end{lemma}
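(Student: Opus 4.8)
The plan is to pass to heaps and to keep multiplying $w$ by single Coxeter generators in a length-increasing way until the heap becomes that of a bi-Grassmannian element. First I would fix the dictionary: for $v\in S_n^{FC}$, the generator $s_i$ is a left (resp.\ right) descent exactly when the heap of $v$ has a minimal (resp.\ maximal) element labelled $s_i$, so $v$ is bi-Grassmannian iff its heap has a unique minimal element and a unique maximal element. I would also record the end-of-proof reduction: any bi-Grassmannian $x\in S_n$ has at most one right descent, so its one-line notation is a concatenation of two increasing sequences; hence $x$ avoids $[321]$, is fully commutative by (the finite case recalled before) Theorem~\ref{t:321-av}, and, writing the positions as $A\sqcup([n]\setminus A)$ with $x$ increasing on each block, $\l(x)\le |A|\,(n-|A|)\le \lfloor n/2\rfloor\lceil n/2\rceil$. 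Thus once a reduced factorisation $\mathsf{x}=\mathsf{u}\mathsf{w}\mathsf{v}$ with $x$ bi-Grassmannian is produced, $\l(w)\le\l(x)\le\lfloor n/2\rfloor\lceil n/2\rceil$ follows immediately.

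The core of the argument is the claim: if $w\in S_n^{FC}$ has at least two left descents, then some generator $s_k$ is not a left descent and satisfies $s_k w\in S_n^{FC}$ (hence $\l(s_kw)=\l(w)+1$), and dually with ``right'' and $ws_k$. Granting this, I iterate from $w$: as long as the current element is not bi-Grassmannian it has $\ge 2$ left or $\ge 2$ right descents, so the claim supplies a length-increasing, fully-commutative multiplication; since the length strictly increases and is bounded by $\binom n2$, after finitely many steps we reach a bi-Grassmannian $x\in S_n$ (all generators used lie in $\{s_1,\dots,s_{n-1}\}$), and collecting the left, resp.\ right, factors gives $\mathsf{u}$, resp.\ $\mathsf{v}$, with $\mathsf{x}=\mathsf{u}\mathsf{w}\mathsf{v}$ reduced. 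The identity, having no descents, is handled separately by taking any bi-Grassmannian $x$ (e.g.\ $s_1$), and the ``right'' form of the claim follows by applying the ``left'' form to $w^{-1}$, which is fully commutative, has left and right descents interchanged, and satisfies $\l(s_iw^{-1})=\l(w^{-1})+1\iff\l(ws_i)=\l(w)+1$.

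To prove the claim, pick left descents at columns $a<b$ with no left descent strictly between, so $b\ge a+2$. If some column $k\in\{a+1,\dots,b-1\}$ is empty in the heap $H$ of $w$, I left-multiply by $s_k$: the adjoined element is minimal and is the only element labelled $s_k$, so no short braid $s_ks_{k\pm1}s_k$ can be created and $s_kw\in S_n^{FC}$. Otherwise all those columns are nonempty; let $c_i$ denote the lowest element of column $i$. Since $c_a$ and $c_b$ are minimal elements of $H$ and consecutive-column lowest elements are always comparable (adjacent generators do not commute), we get $c_a<c_{a+1}$ and $c_b<c_{b-1}$, so there is a ``peak'' column $k\in\{a+1,\dots,b-1\}$ with $c_{k-1}<c_k$ and $c_{k+1}<c_k$. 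Now left-multiply by $s_k$ and call the adjoined element $e$ (a minimal element of the new heap $H'$). For every element $c'$ of $H$ labelled $s_k$ we have $c'\ge c_k$, hence $c_{k-1}<c'$ and $c_{k+1}<c'$, while $e<c_{k-1}$ and $e<c_{k+1}$ because the generators are adjacent; therefore the open interval strictly between $e$ and $c'$ in $H'$ contains the two distinct elements $c_{k-1}$ and $c_{k+1}$. Since adjoining $e$ can only enlarge intervals, no convex $3$-chain of shape $s_ks_{k\pm1}s_k$ is created, so by Stembridge's convex-chain criterion $s_kw$ is fully commutative, which proves the claim.

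I expect the delicate step to be precisely this last verification — that adjoining a minimal element at a peak column introduces no short braid — since it is where the convex-chain characterisation of full commutativity must be applied with care (in particular, knowing the forbidden $3$-chain must be \emph{convex}, not merely a chain, is what makes the ``two neighbours block it'' observation work). The remaining ingredients — the reduction to heaps, the termination bound, and the length estimate for bi-Grassmannian permutations via the two-increasing-runs decomposition — are routine.
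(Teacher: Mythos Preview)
Your proof is correct and follows the same overall strategy as the paper: iteratively extend $w$ by length-increasing generator multiplications until the heap has a unique minimal and a unique maximal element.  The paper phrases this via the coalesced heap embedding in $\mathbb{Z}^2$ from \cite{b-w}: it first fills every empty column, then scans the ``ridgeline'' of maximal elements as a lattice path and fills each down-up valley by a right multiplication (and dually at the bottom), relying on \cite[Lemma~1]{b-w} for the structural facts.  You instead work with the abstract heap poset, locate a peak among the column-lowest elements between two consecutive left descents, and verify directly via Stembridge's convex-chain criterion that the multiplication preserves full commutativity; this makes the argument self-contained where the paper leans on the geometric picture.  For the length bound, the paper observes that a bi-Grassmannian heap is a quadrilateral and maximizes the number of lattice points, while you read the same bound off the two-increasing-run structure of the one-line notation and the inversion count $|A|\,(n-|A|)$.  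The two arguments are equivalent in content; yours is slightly more elementary in that it avoids the $\mathbb{Z}^2$ embedding, while the paper's is more pictorial.
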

\begin{proof}
Recall the coalesced heap diagram from \cite[Section 3]{b-w} associated to any
fully commutative element $w$.  This diagram is an embedding of the Hasse
diagram of the heap poset defined in \cite{s1} into $\mathbb{Z}^2$.  In this
diagram, an entry of the heap poset represented by $(x,y) \in \mathbb{Z}^2$ is
labeled by the Coxeter generator $s_i$ if and only if $x = i$.  Moreover, 
we have that a generator represented by $(x,y)$ covers a generator represented by
$(x',y')$ in the heap poset if and only if $y = y' + 1$ and $x = x' \pm 1$.  See
\cite[Remark 5]{b-w} for details.  An example of a heap diagram is shown in
Figure~\ref{fig:bigr}.

Next, we describe a sequence of length-increasing multiplications on the left
and right that will transform $w$ into a bi-Grassmannian permutation.  An
example of this construction is illustrated in Figure~\ref{fig:bires}.  First,
if there are any columns $1 \leq i \leq (n-1)$ in the heap diagram of $w$ that
do not contain an entry, then multiply on the right by $s_i$ to add an entry to
the heap diagram, and then recoalesce the heap diagram.  Henceforth, we assume
that every column in the heap diagram of $w$ has at least one entry.  Moreover,
it follows from \cite[Lemma 1]{b-w} that columns $1$ and $(n-1)$ of the heap
diagram contain precisely one entry.

Next, consider the \em ridgeline \em in the heap diagram of $w$ consisting of the
points that correspond to maximal elements in the heap poset.  By construction,
the ridgeline can be interpreted as a lattice path consisting of \em up-steps
\em of the form $\big( (i,y), (i+1,y+1) \big)$ and \em down-steps \em of the
form $\big( (i,y), (i+1,y-1) \big)$.  For each sequence of the form $\big(
(i,y), (i+1,y-1), (i+2,y) \big)$, we multiply on the right by an $s_{i+1}$
generator to add a new entry to the ridgeline and transform the sequence from
down-up to up-down.  When we have performed these multiplications until there
are no more down-up sequences along the ridgeline, our heap diagram encodes a
fully-commutative permutation with a unique right descent.  In a completely
similar fashion, we can also perform multiplications on the left to produce a
heap which encodes a fully-commutative permutation with a unique left descent.
Hence, our transformed permutation is bi-Grassmannian.  

When $w$ is bi-Grassmannian, the heap of $w$ forms a quadrilateral by
\cite[Lemma 1]{b-w} as illustrated in Figure~\ref{fig:bigr}.  The Coxeter length
of $w$ is the number of lattice points in the quadrilateral, and this is
maximized when the unique left and right descents occur as close to $n/2$ as
possible.  Hence, $\l(w) \leq \big\lfloor n/2\big\rfloor \big\lceil n/2
\big\rceil$. 
\end{proof}

\begin{proposition}
Let $w \in \tld{S}_n^{FC}$ be a short element.  Then $\l(w) \leq 2\big\lfloor n/2\big\rfloor \big\lceil n/2 \big\rceil$.  In addition, there exists a $w \in \tld{S}_n^{FC}$ with length $2\big\lfloor n/2\big\rfloor \big\lceil n/2 \big\rceil$.
\label{p:longshort}
\end{proposition}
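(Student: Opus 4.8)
The plan is to handle the bound and the construction separately, using throughout the decomposition $w = w^0 v$ with $w^0 \in \tld{S}_n / S_n$ the minimal-length representative of its coset and $v \in S_n$, so that $\l(w) = \l(w^0) + \l(v)$.

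For the upper bound I would bound each of $\l(w^0)$ and $\l(v)$ by $\lfloor n/2 \rfloor \lceil n/2 \rceil$. The abacus of $w^0$ is an $(L)(M)(R)$ abacus with $L + M + R = n$, so by Proposition~\ref{p:fixedLMR} the length of $w^0$ is at most the degree of $q^{L+R-1}\qbinom{L+R-2}{L-1}$, which is $(L+R-1) + (L-1)(R-1) = LR$; since $4LR \le (L+R)^2 \le n^2$ and $LR$ is an integer, $\l(w^0) \le \lfloor n^2/4 \rfloor = \lfloor n/2 \rfloor \lceil n/2 \rceil$ (the case $R = 0$ is the $(n)(0)(0)$ abacus, where $w^0$ is the identity). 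For $v$: since $w$ is fully commutative, Theorem~\ref{t:321-av} says its complete notation avoids $[321]$, hence so does its base window; and because the base window of $w^0$ is increasing, the base window of $w = w^0 v$ is order-isomorphic to the one-line notation of $v$, so $v$ avoids $[321]$, i.e.\ $v \in S_n^{FC}$. Lemma~\ref{l:bigr} then gives $\l(v) \le \lfloor n/2 \rfloor \lceil n/2 \rceil$, and summing proves $\l(w) \le 2 \lfloor n/2 \rfloor \lceil n/2 \rceil$.

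For the existence claim I would exhibit an explicit short element of that length. Set $L = \lceil n/2 \rceil$ and $R = \lfloor n/2 \rfloor$, so $LR = \lfloor n/2 \rfloor \lceil n/2 \rceil$. Let $w^0$ be the coset representative whose normalized abacus has $\L$-runners $1, \ldots, L$ and $\R$-runners $L+1, \ldots, n$ with the (unique) level-one beads of the $\R$-runners filling the last $R$ reading-order positions $n+L+1, \ldots, 2n$; as in the proof of Proposition~\ref{p:fixedLMR}, Proposition~\ref{p:length} gives $\l(w^0) = LR$, and the last bead of this abacus lies in position $2n$, so $w^0$ --- and hence $w^0 v$ for any $v$ --- is short. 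After rebalancing, the base window of $w^0$ is a run of $L$ consecutive integers followed by a run of $R$ larger consecutive integers. Take $v = [\,L+1, L+2, \ldots, n, 1, 2, \ldots, L\,] \in S_n$: it is a concatenation of two increasing runs, so it avoids $[321]$, and it has exactly $LR$ inversions, whence $\l(w^0 v) = LR + LR = 2LR$.

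It remains to verify that $\tld{w} := w^0 v$ is fully commutative. By construction the base window of $\tld{w}$ consists of the $R$ larger entries of $w^0$, in increasing order, followed by the $L$ smaller entries, in increasing order; since each of these two groups forms a run of consecutive integers (so spans fewer than $n$), one checks that the two subsequences of the complete notation of $\tld{w}$ consisting, respectively, of the entries in the residue classes of the larger group and of the entries in the residue classes of the smaller group are each increasing. Hence the complete notation of $\tld{w}$ is an interleaving of two increasing sequences, which has no decreasing subsequence of length three, so $\tld{w}$ is fully commutative by Theorem~\ref{t:321-av}, with $\l(\tld{w}) = 2 \lfloor n/2 \rfloor \lceil n/2 \rceil$ as required. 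I expect this final check --- locating the two consecutive runs making up the base window of $w^0$ precisely and tracking them across windows --- to be the only genuine work in the argument; the upper bound falls out at once from Proposition~\ref{p:fixedLMR} and Lemma~\ref{l:bigr}.
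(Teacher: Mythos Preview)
Your proposal is correct and follows essentially the same approach as the paper: decompose $\tld w = w^0 v$, bound $\l(w^0)$ by $LR \le \lfloor n/2\rfloor\lceil n/2\rceil$ via the $(L)(M)(R)$ abacus structure, bound $\l(v)$ by Lemma~\ref{l:bigr}, and exhibit the same bi-Grassmannian-times-extremal-abacus element (with the roles of $\lfloor n/2\rfloor$ and $\lceil n/2\rceil$ swapped, which is immaterial). Your write-up is in fact a bit more careful than the paper's in two places --- you explain why $v$ must be $[321]$-avoiding, and you verify via the two-increasing-subsequence argument that the constructed element is fully commutative --- whereas the paper simply asserts both.
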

\begin{proof}
Let $\tld{w} \in \tld{S}_n^{FC}$ be a short element.  Then, by the
parabolic decomposition, $\tld{w} = w^0 w$ where $w \in S_n^{FC}$ and $w^0$ is a
minimal length coset representative with an associated $(L)(M)(R)$ abacus.

First, we determine the values of $L$, $M$, and $R$ that give $w^0\in
\tld{S}_n/S_n$ of longest Coxeter length.
For fixed $L$, $M$, and $R$, Proposition~\ref{p:length} implies that the
longest Coxeter length of a minimal length coset representative having an
$(L)(M)(R)$ abacus occurs when there are gaps in positions $n+1$ through $n+L$,
beads in positions $n+L+1$ through $n+L+R$ and gaps in positions $n+L+R+1$
through $2n$.  The length of this minimal length coset representative is $LR$,
which is maximized when $M=0$ and $L$ and $R$ are as close to $n/2$ as possible.  
Therefore, $\l(w^0) \leq \big\lfloor n/2\big\rfloor \big\lceil n/2 \big\rceil$ and is exactly equal in the case where $w^0=\big[1,2,\ldots,\lfloor n/2\big\rfloor,n+\lfloor n/2\big\rfloor+1,\ldots,2n\big]$.

Considering the other factor $w \in S_n^{FC}$, it follows from Lemma~\ref{l:bigr}
that $\l(w) \leq \big\lfloor n/2\big\rfloor \big\lceil n/2 \big\rceil$.
Moreover, this length is maximized when $w$ is a bi-Grassmannian permutation
with left and right descents occurring as close to $n/2$ as possible.

Adding the bounds to obtain $\l(\tld{w}) = \l(w) + \l(w^0) \leq 2 \big\lfloor
n/2\big\rfloor \big\lceil n/2 \big\rceil$ proves the result.  In addition, the
one-line notation for a bi-Grassmannian permutation has the form
$[i+1,i+2,\ldots,n,1,2,\ldots,i]$ for some $i$.  When $i=\lfloor
n/2\big\rfloor$, this bi-Grassmannian applies directly to the above affine
permutation to give the fully commutative affine permutation $\big[n+\lfloor
n/2\big\rfloor+1,\ldots,2n,1,2,\ldots,\lfloor n/2\big\rfloor\big]$ of length $2
\big\lfloor n/2\big\rfloor \big\lceil n/2 \big\rceil$.
\end{proof}

Corollary~\ref{c:period} and Proposition \ref{p:longshort} give another way to
compute the series $f_{n}(q)$, without invoking Theorem~\ref{t:all}.  Using
a computer program, one needs simply to count the fully-commutative elements of
$\tld{S}_n$ of length up to $n+2\big\lfloor n/2\big\rfloor \big\lceil n/2
\big\rceil$.

\bigskip
\section{Further questions}
\label{s:future}

In this work, we have studied the length generating function for the fully
commutative affine permutations.  It would be interesting to explore the
ramifications of the periodic structure of these elements in terms of the
affine Temperley--Lieb algebra.  Also, all of our work should have natural
extensions to the other Coxeter groups.  In fact, we know of no analogue of
\cite{barcucci} enumerating the fully commutative elements by length for finite
types beyond type $A$.  It is a natural open problem to establish the
periodicity of the length generating functions for the other affine types.  It
would also be interesting to determine the analogues for other types of the
$q$-binomial coefficients and $q$-Bessel functions that played prominent roles
in our enumerative formulas.

Finally, it remains an open problem to prove that the periodicity of the length
generating function coefficients for fixed rank begins at length $1+\big\lfloor
(n-1)/2\big\rfloor \big\lceil (n-1)/2 \big\rceil$, as indicated by the data.
By examining the structure of the heap diagrams associated to the fully
commutative affine permutations, we have discovered some plausible reasoning
indicating this tighter bound, but a proof remains elusive.


\bigskip
\section*{Acknowledgments}

We are grateful to the anonymous referees for their insightful comments, and for
suggesting the formula in Corollary~\ref{c:period} for the asymptotic number of
affine fully commutative permutations with a given length in prime rank.


\bigskip

\end{document}